\newcolumntype{L}[1]{>{\raggedright\let\newline\\\arraybackslash\hspace{0pt}}m{#1}}
\newcolumntype{C}[1]{>{\centering\let\newline\\\arraybackslash\hspace{0pt}}m{#1}}
\newcolumntype{R}[1]{>{\raggedleft\let\newline\\\arraybackslash\hspace{0pt}}m{#1}}
\newtheorem{theorem}{Theorem}[section]
\newtheorem{lemma}[theorem]{Lemma}
\theoremstyle{remark} }
\theoremstyle{notation} }
\theoremstyle{definition}
\newcommand{\bg}[1]{\medskip\noindent{\bf #1}}
\newcommand{\hide}[1]{}
\begin{document}
	
	\title{Dynamic Pricing in Smart Grids under Thresholding Policies: Algorithms and Heuristics}
	
	\author{\IEEEauthorblockN{Zaid Almahmoud, Jacob Crandall,   Khaled Elbassioni, Trung Thanh Nguyen, \& Mardavij Roozbehani    }
		
		\thanks{Z. Almahmoud and K. Elbassioni are with the department of Electrical Engineering and Computer Science at Masdar Institute of Science and Technology, Abu Dhabi, United Arab Emirates	(e-mail: \{zjalmahmoud,kelbassioni\}@masdar.ac.ae).  
			J. Crandall is with the department of Computer Science at Brigham Young University, Provo, UT (e-mail: crandall@cs.byu.edu).
			T. Nguyen is with is with the department of Computer Science at
			New York University, Abu Dhabi, United Arab Emirates (e-mail: ttn1@nyu.edu). 
			M. Roozbehani is with the Laboratory for Information and Decision Systems, Massachusetts Institute of Technology (e-mail: mardavij@mit.edu).
		}
	}

	\maketitle

	\begin{abstract}
		
		Minimizing the peak power consumption and matching demand to supply, under fixed threshold polices, are two key requirements for the success of the future electricity market. In this work, we consider dynamic pricing methods to minimize the peak load and match demand to supply in the smart grid. As these optimization problems are computationally hard to solve in general, we propose generic heuristics for approximating their solutions. Further, we provide theoretical analysis of uniform pricing in peak-demand minimization. Moreover, we propose optimal-pricing algorithms for scenarios in which the time-period in which tasks must be executed is relatively small. Finally, we conduct several experiments to evaluate the various algorithms on real data.
		
	\end{abstract}

	\section{Introduction}
	The International Energy Agency reported in 2009 that power consumption has dramatically increased over the last 25 years \cite{al2014reducing}.  Importantly, buildings account for about 40\% of the total energy consumption \cite{hammad2010energy}. 
	Consequently, smart and sustainable solutions need to optimize power consumption and minimize electricity loads.

	Smart grids combine advanced monitoring and communication technologies to provide energy in a smart, efficient, and user friendly manner \cite{logenthiran2012demand}.
	With a smart grid, energy providers could use dynamic pricing to optimize objectives, such as minimizing the peak demand and matching time-varying demand to time-varying supply~\cite{borgs2014optimal}.  In dynamic pricing, the price of electricity is varied over time to encourage consumers to alter their behavior. However, to do so effectively, energy providers must understand consumer behavior, including their demands and flexibility~\cite{roozbehani2012volatility}, since strategic consumers are likely to shift consumption to periods with low prices~\cite{ohannessian2014dynamic}. When energy providers know or can effectively estimate the algorithms (and parameters) used by consumers to make decisions, they can potentially set prices to effectively influence collective consumer behavior.

	In this work, we consider two optimization problems in the smart grid: minimizing peak demand and matching time-varying demand to time-varying supply.  In both problems, we consider a composition of consumers in a power system that use a threshold policy to determine when to consume electricity. The composition includes the number of arrivals, their demands, and their consumption deadlines, at different time periods.  We assume that the consumer has a single amount of demand to be consumed at a time period between her arrival and deadline.  For both problems, we consider scenarios in which the consumers' composition is both known and unknown beforehand.
	
	This model gives rise to optimization problems that are computationally (NP-)hard to solve in general. To overcome this intrinsic difficulty, we first propose and evaluate two generic heuristics for providing an approximate solution for the two problems.  While we are not able to theoretically analyze  in general how close to optimal the solutions produced by these heuristics, we provide theoretical analysis of the approximation gap in the case when {\it uniform pricing} is applied to minimize peak demand. Moreover, we propose {\it optimal} pricing algorithms that can be used when the maximum deadline period of the power jobs is relatively small. Finally, we conduct several experiments to evaluate the performance of the proposed algorithms, which are validated by the use of real appliances data available on Pecan Street's Dataport website \cite{dataport}.  We begin by reviewing related prior work.


\section{Literature Review}
\label{review}

\subsection{Estimation of Price-Response}
To effectively set prices, it is important to estimate the consumers' response to the chosen prices. Several models have been considered to estimate the consumers' demand as a function of price \cite{conejo2010real}, \cite{ohannessian2014dynamic}, \cite{ahn2007pricing}. We focus on the model presented by M. Ohannessian et al. \cite{ohannessian2014dynamic}, as we use it in this work. In this model, consumers have demands that must be met be a certain deadline.  A consumer may delay her consumption up until this deadline if prices are not satisfactory.  Consumers use a threshold policy to determine when to consume, in which they consume when the prices falls below the computed threshold.

Given this problem formulation, the authors presented a formula for computing the aggregate consumption as a function of the price when the arrivals and their demands are known beforehand. The authors additionally presented an averaged model that can be used when the exact knowledge about the number of arrivals and their demands is unknown.  The model assumes knowledge about the arrival and demand rates, and uses them to estimate the expected consumption.  When the consumers' composition is unknown beforehand, the authors proposed an estimator that uses the history of the price and the total consumption to estimate the arrival and demand rates. The authors showed that, when the number of time periods in the history is sufficiently large, the proposed estimator is consistent and unbiased.

\subsection{Peak-Demand Pricing}
C. Ibars, M. Navarro, and L. Giupponi \cite{ibars2010distributed}, proposed a distributed load-management technique for controlling the consumer load profile using dynamic pricing. The technique aims balance consumption across the different hours of the day. The problem was formulated as a congestion game that models a competition over the network, where the cost is a function of the congestion level. The game was shown to converge to a Nash equilibrium, after a finite number of improvement steps.  Simulations showed that the proposed demand-management scheme leads to a reduction in the peak demand, compared to the unmanaged demand scheme.

N. Li, L. Chen, and S.H. Low \cite{li2011optimal} showed that dynamic pricing can be used to manage the consumers' demand to benefit both consumers and the utility.  The objective of the utility company, which serves a group of consumers, is to maximize the social welfare, and the objective of the individual consumer is to maximize her net benefit. The authors prove that dynamic pricing can lead to the optimality of the social welfare as well as the individual objective, reaching a Nash equilibrium.  Based on this result, the authors proposed a distributed algorithm in which the consumers and the utility company compute the equilibrium iteratively. Simulations showed that the proposed scheme balances demand over time, and, thus, effectively reduces the peak load.

\subsection{Matching Demand to Supply}
S.D. Ramchurn et al. \cite{ramchurn2012putting} argued that balancing demand and supply is achieved in today's power grid through a real-time varying of the supply to match the demand. Nevertheless, the authors suggested that a more powerful balancing scheme is where the demand follows the supply, as its flattens the peak load, avoids overloading the generators, and leads to fast recovery given power failures.  These authors proposed time-of-use (TOU) pricing, which provides an expensive price during peak hours. However, it has been shown that such pricing methods result in a high peak demand during the off-peak hours \cite{strbac2008demand}. This drawback of static pricing has led many researchers to consider dynamic pricing \cite{roozbehani2012volatility}.

T.K. Wijaya, K.M. Larson, and K. Aberer \cite{wijaya2013matching} proposed a methodology in which the peak-to-average-ratio (PAR) is explicitly cut from the supply load through a PAR-Cut algorithm, and the consumers adapt to the resulting load. The adaptation is done through a multiunit auction, which results in a redistribution of the load. The auction provides truthful bidding--a consumer has no incentive to lie about her valuation. The experimental results demonstrated a significant cost reduction when the cut percentage is above 20\%. Moreover, the experiments showed that the consumers can save up to approximately 20\% of their electricity bill, depending on the PAR cut percentage and the consumers' valuation. Finally, the company can gain up to about 10\% in additional revenue depending on the PAR cut.

J. A. Taylor et al. \cite{taylor2013consolidated} considered consolidated dynamic pricing. The regulation includes services that adapt in real-time to unexpected system changes.  The authors proposed a regulation scheme based on optimal control-based pricing \cite{berger1989real}, while incentivizing a linear quadratic optimal regulation to reach a competitive equilibrium. Finally, the authors utilized tools from mechanism design and convex optimization to make the approach significantly more adaptable and practical.

\subsection{Our Contribution}
In our work, we utilize a simplified version of the model presented by M. Ohannessian et al. \cite{ohannessian2014dynamic} to estimate consumer price response to solve the problems of peak-demand minimization and matching demand to supply using dynamic pricing.  We also utilize the estimator proposed by M. Ohannessian et al.~\cite{ohannessian2014dynamic} to estimate the consumers' composition when it is unknown.

\section{System Model}
\label{model}

\subsection{Problem Formulation - peak-demand minimization}
In this section, we present our mathematical formulation for the problem of peak-demand minimization, as follows. We denote the time by $k\in \{1, \dots, K \}$. Let $A_{n}(k)$ denote the number of arrivals at time $k$, with deadline $n \in \{1, \dots, N\}$. Without loss of generality, we assume that each consumer has a single amount of demand to be consumed at a time period between her arrival and deadline. Therefore, we ignore the backlog demand notation used by M. Ohannessian et al. \cite{ohannessian2014dynamic}. Consequently, the demand of consumer $j$ is independent of time, and denoted by $d_{j}$. Let $\lambda(k)$ denote the price per unit of consumption at time $k$. Let $\tau_{t}$ denote the threshold policy of consumers who have time to go $t$, where $t \in \{1, \dots, N \}$. Let $G_{s,t}(k)$ denote the group of consumers at time $k$, who have time to go $t$, have been in the system for $s$ time periods, where $s \in \{0, \dots, N-1\}$, and have not yet consumed their demands. Let $u(k,\lambda)$ denote the total power consumption at time $k$ when the price is $\lambda$. Our objective is to set the prices at the different time periods such that the peak demand is minimized. More formally, the problem formulation is,
{
\begin{align}
\label{pdp_o}
&\min \quad \{\max\limits_{1\leq k \leq K}u(k,\lambda)\}\\
\text{s.t.}&\quad
\label{pdp_c1}
\forall{k=1, \dots, K}: u(k,\lambda) =\sum_{t:\lambda \leq \tau_t}^{} \sum_{s=0}^{N-t} \left ( \sum_{j \in G_{s,t}(k)} d_{j}\right)
\end{align}
}

 Constraint (\ref{pdp_c1}) ensures that the consumption at time $k$ accounts for all consumers with threshold policies that accept the chosen price at time $k$.

\subsection{Problem Formulation - Matching Demand To Supply}

In this section, we present our mathematical formulation for the problem of matching demand to supply. We will use the formulation presented in the previous section, with a simple modification as follows. Let $S(k)$ denote the power supply at time $k$. Recall that $u(k,\lambda)$ denotes the total power consumption at time $k$ when the price is $\lambda$. Our objective is to set the price such that the MSE between the total power consumption and the power supply is minimized. More formally, the problem formulation is,

\begin{equation}
\label{obj_dsm}
\min \{\frac{1}{K}\sum_{k=1}^{K}(u(k,\lambda)-S(k))^{2}\}
\end{equation}
such that (\ref{pdp_c1}) is satisfied.

It can be shown that this problem is NP-complete\footnote{This means that this problem belongs to a class of problems for which a polynomial time (efficient) algorithm is unlikely to exist.} (see the appendix). 
Thus, in Section~\ref{generic} we will consider heuristics for solving the problem.

\subsection{Assumptions}
We assume that the consumers' arrival follows a Poisson process, which is independent for the different deadlines and the demand process, and independent and identically distributed (i.i.d.) across time with a mean $\alpha_{n}$, where $n \in \{1, \dots, N \}$. This assumption is consistent with the system model presented by M. Ohannessian et al.~\cite{ohannessian2014dynamic}.  We also assume that the maximum deadline period $N$ is constant. Finally, we assume that the threshold policy $\tau_{t}$ is monotonically increasing with the decrease of $t$, and that all threshold policies guarantee consumption by the consumers' deadlines.

\label{np}



\section{Heuristics}
\label{generic}
\subsection{Greedy Heuristic}
In this section, we describe a generic greedy heuristic that can handle several pricing optimization problems, including the problems of peak-demand minimization, and matching demand to supply. In addition to the heuristic description, we provide an analysis for the time complexity of the heuristic.

\subsubsection{Algorithm Description} 
The pseudocode of the Greedy heuristic is presented in Algorithm \ref{greedy_heuristic}. At each time period, the heuristic chooses a price among the threshold policies, such that the optimization objective is locally achieved. Achieving the optimization objective locally is defined as optimizing the consumption at time period $k$, without considering the other time periods. The chosen prices at the different time periods are finally returned as an output. The Greedy heuristic can be used to solve the peak-demand minimization problem, by setting the optimization objective to (\ref{pdp_o}). Similarly, the problem of matching demand to supply can be solved using the Greedy heuristic by setting the optimization objective to (\ref{obj_dsm}).

\begin{algorithm}

\DontPrintSemicolon

 \caption{\sc {Greedy Heuristic}}
 \label{greedy_heuristic}

  \SetKwInOut{Input}{input}
  \SetKwInOut{Output}{output}
  
  \BlankLine
 \Input{Optimization objective $OB$, time horizon $K$, threshold policies $T=\{\tau_{t}\}_t$} 
 \BlankLine

 \Output{Optimal prices $\lambda$ that achieve $OB$, satisfying (\ref{pdp_c1}) }
 
  \BlankLine

\For{k=1 to $K$}
{
 \BlankLine

$\lambda$[$k$]= Choose a price $\tau_{t} \in T$, that achieves $OB$ locally at time period $k$

}
  \BlankLine

return $\lambda$
\end{algorithm}

\begin{theorem}
The expected runtime of the Greedy heuristic is $O(K N \sum_{t=1}^{N}\sum_{s=0}^{N-t} \alpha_{s+t})$.
\end{theorem}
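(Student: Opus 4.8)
The plan is to charge the total runtime against the work done in the $K$ iterations of the single outer loop of Algorithm~\ref{greedy_heuristic}, and then take expectations over the Poisson arrival process. First I would observe that in iteration $k$ the heuristic must, for each of the $N$ candidate prices $\tau_t\in T$, evaluate the local objective; since the local objective is a function of the consumption $u(k,\tau_t)$ defined by~(\ref{pdp_c1}) (plus, in the peak case, a comparison against the running maximum, and in the matching case the term $S(k)$, both $O(1)$ extra work), the cost of one such evaluation is dominated by that of forming the double sum in~(\ref{pdp_c1}), i.e. $O\!\left(\sum_{t=1}^{N}\sum_{s=0}^{N-t}|G_{s,t}(k)|\right)$ additions, one per unconsumed consumer currently in the system. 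Hence the work in iteration $k$ is $O\!\left(N\sum_{t=1}^{N}\sum_{s=0}^{N-t}|G_{s,t}(k)|\right)$, and the bookkeeping that advances $G_{\cdot,\cdot}(k)$ to $G_{\cdot,\cdot}(k+1)$ costs no more (it only inserts the newly arrived consumers, deletes those who just consumed, and decrements time-to-go), so it is absorbed into the same bound.

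Next I would bound $|G_{s,t}(k)|$ in a way that does not depend on the prices chosen so far. A consumer counted in $G_{s,t}(k)$ has been in the system for $s$ periods and has time-to-go $t$, hence arrived at time $k-s$ with (relative) deadline $s+t$; therefore $|G_{s,t}(k)|\le A_{s+t}(k-s)$ regardless of which consumers have already consumed, adopting the convention $A_{s+t}(k-s)=0$ when $k-s<1$. This domination is the crucial step, because $G_{s,t}(k)$ itself is a random variable coupled to the algorithm's earlier price decisions, whereas $A_{s+t}(k-s)$ depends only on the exogenous arrival process. By the Poisson assumption (i.i.d.\ across time with mean $\alpha_{s+t}$), we get $\E{A_{s+t}(k-s)}=\alpha_{s+t}$ for $k-s\ge 1$ and $0$ otherwise, so $\E{|G_{s,t}(k)|}\le\alpha_{s+t}$ in all cases.

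Finally I would combine the two steps using only linearity of expectation (no independence across $k$ is needed): the expected total runtime is at most
\[
\sum_{k=1}^{K} O\!\left(N\sum_{t=1}^{N}\sum_{s=0}^{N-t}\E{|G_{s,t}(k)|}\right)
\;\le\; \sum_{k=1}^{K} O\!\left(N\sum_{t=1}^{N}\sum_{s=0}^{N-t}\alpha_{s+t}\right)
\;=\; O\!\left(KN\sum_{t=1}^{N}\sum_{s=0}^{N-t}\alpha_{s+t}\right),
\]
as claimed. I expect the only genuine subtlety to be the one already flagged: the group sizes $|G_{s,t}(k)|$ are not a clean function of the i.i.d.\ arrivals but are entangled with the heuristic's own past choices, which is why the deterministic upper bound $|G_{s,t}(k)|\le A_{s+t}(k-s)$ must be in hand before any expectation is taken. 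Everything else — counting candidate prices, bounding the per-evaluation cost by the number of resident consumers, and handling the boundary terms $k\le s$ — is routine.
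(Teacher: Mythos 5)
Your proposal is correct and follows essentially the same route as the paper: $K$ iterations, times $N$ candidate prices, times the expected number of resident consumers, which is bounded by $\sum_{t=1}^{N}\sum_{s=0}^{N-t}\alpha_{s+t}$ per period. Your intermediate domination $|G_{s,t}(k)|\le A_{s+t}(k-s)$ merely makes rigorous the step the paper dismisses as ``obvious'' (that $\mathrm{E}[|G_{s,t}(k)|]\le\alpha_{s+t}$ despite the coupling with past price choices), so it is a welcome tightening rather than a different argument.
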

\begin{proof}
The Greedy heuristic performs $K$ iterations to set the prices at the different time periods. In each iteration, the heuristic loops over $N$ threshold policies to select the optimal one. For each threshold policy, the heuristic accumulates the consumers' demand that would result when setting the price to that threshold policy. To identify the number of consumers whose demands to be accumulated at each time period, we recall the notation $G_{s,t}(k)$, which denotes the group of consumers at time $k$, who have time to go $t$, and have been in the system for $s$ time periods. Based on our early assumptions, it is obvious that the expected number of consumers in $G_{s,t}(k)$ is $\alpha_{s+t}$. Therefore, the expected number of consumers at each time period is no more than $\sum_{t=1}^{N}\sum_{s=0}^{N-t} \alpha_{s+t}$. Consequently, the expected runtime of the Greedy heuristic is $O(K N \sum_{t=1}^{N}\sum_{s=0}^{N-t} \alpha_{s+t})$.
\end{proof}

\subsection{Sliding-Window Heuristic}
In this section, we describe another generic heuristic called Sliding-Window, which can be used to solve the two formulated problems. As in the previous section, we additionally analyze the time complexity of the Sliding-Window heuristic. Further, we discuss the potential improvement in the approximation ratio of the Sliding-Window over the Greedy heuristic.

\subsubsection{Algorithm Description} 
The pseudocode of the Sliding-Window heuristic is presented in Algorithm \ref{sliding-window_heuristic}. In addition to the optimization objective, the heuristic receives a constant window size $W$ as an input. At each time period $k$, the heuristic finds the locally optimal sequence of prices in the time interval [$k$, $k+W-1$], by brute force. The price found at time $k$ is then stored in the final solution. The Sliding-Window heuristic can be used to solve the peak-demand minimization problem, by simply setting the optimization objective to (\ref{pdp_o}). Similarly, the problem of matching demand to supply can be solved using the Sliding-Window heuristic by setting the optimization objective to (\ref{obj_dsm}).

\begin{algorithm}

\DontPrintSemicolon

 \caption{\sc {Sliding-Window Heuristic}}
 \label{sliding-window_heuristic}

  \SetKwInOut{Input}{input}
  \SetKwInOut{Output}{output}
  
  \BlankLine
 \Input{Optimization objective $OB$, window size $W$, time horizon $K$, threshold policies $T=\{\tau_{t}\}_t$} 
 \BlankLine

 \Output{Optimal prices $\lambda$ that that achieve $OB$, satisfying (\ref{pdp_c1}) }
 
  \BlankLine

$first=1$; $last=W$\;
\While{$last \leq K$}
{

\BlankLine

$\lambda$[$first$], $\lambda$[$first+1$], $\dots$, $\lambda$[$last$] = Choose a sequence of prices $S=(\tau_{a}, \tau_{b}, \dots, \tau_{z}$), where $|S|=W$, and $\tau_{a},\tau_{b},\dots,\tau_{z} \in T$, that achieves $OB$ locally in the time interval [$first$, $last$], by brute force.
\BlankLine

$first$++;
$last$++\;
}
  \BlankLine

return $\lambda$
\end{algorithm}

\begin{theorem}
The expected runtime of the Sliding-Window heuristic is $O(K N^{W} \sum_{t=1}^{N}\sum_{s=0}^{N-t} \alpha_{s+t})$.
\end{theorem}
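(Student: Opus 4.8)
The plan is to reuse exactly the accounting scheme from the Greedy runtime theorem, with the brute-force search over the length-$W$ window supplying the extra $N^{W}$ factor. First I would bound the number of passes of the \texttt{while} loop: since $first$ starts at $1$, $last$ starts at $W$, both increase by one each pass, and the loop stops once $last > K$, there are exactly $K-W+1 = O(K)$ iterations (recall that $N$, and hence the window size $W$, is treated as a constant under our assumptions). Inside a single iteration the heuristic enumerates every sequence $S=(\tau_a,\tau_b,\dots,\tau_z)$ of length $W$ with each entry drawn from the $N$ threshold policies in $T$, giving $N^{W}$ candidate sequences; for a fixed sequence, checking that it "achieves $OB$ locally in $[first,last]$" means forming the consumption $u(k,\lambda)$ at each of the $W$ periods in the window and then evaluating the objective (a max for (\ref{pdp_o}), a sum of squares for (\ref{obj_dsm})) on those $W$ numbers, the latter costing only $O(W)=O(1)$.

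The step carrying the probabilistic content is bounding the expected cost of building one $u(k,\lambda)$. Just as in the Greedy analysis, computing $u(k,\lambda)$ via (\ref{pdp_c1}) amounts to summing $d_j$ over the consumers in the groups $G_{s,t}(k)$ with $\lambda\le\tau_t$, and under the Poisson arrival assumptions the expected size of $G_{s,t}(k)$ is $\alpha_{s+t}$; hence the expected number of consumers touched while forming a single $u(k,\lambda)$ is at most $\sum_{t=1}^{N}\sum_{s=0}^{N-t}\alpha_{s+t}$. I would stress here that this bound holds \emph{regardless} of the prices the heuristic has already committed to: the populations present in the later periods of a window do depend on the earlier pricing choices, but their expectations are still governed solely by the arrival rates $\alpha_n$.

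Finally I would combine the pieces by linearity of expectation. Since the loop structure (number of iterations, number of enumerated sequences, number of periods per window) is deterministic, the expected total work is at most
\[
(K-W+1)\cdot N^{W}\cdot W\cdot\Bigl(\sum_{t=1}^{N}\sum_{s=0}^{N-t}\alpha_{s+t}\Bigr)\cdot O(1),
\]
and absorbing the constants $W$ and $O(1)$ yields the claimed $O\bigl(KN^{W}\sum_{t=1}^{N}\sum_{s=0}^{N-t}\alpha_{s+t}\bigr)$. The only genuine obstacle is the bookkeeping in the middle step: one must resist re-drawing fresh randomness for each of the $N^{W}$ sequences — the arrivals are realized once and for all, so the same populations are reused across the brute-force search, and a single expectation over the arrival process is taken at the very end, after which the deterministic count multiplies the one per-period expectation.
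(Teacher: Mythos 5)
Your proposal is correct and follows essentially the same accounting as the paper's own proof: $O(K)$ window positions, $N^{W}$ brute-force sequences per window, and an expected per-period demand-accumulation cost of $\sum_{t=1}^{N}\sum_{s=0}^{N-t}\alpha_{s+t}$. You are somewhat more explicit than the paper about absorbing the extra factor of $W$ (the periods per window) into the constant and about applying linearity of expectation over the deterministic loop structure, but these are refinements of the same argument rather than a different route.
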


\begin{proof}
The Sliding-Window heuristic performs $K-W+1$ iterations to set the prices at the different time periods. At each time period $k$, where $k \leq K-W+1$, the heuristic finds the optimal sequence of prices in the window between the time periods $k$, and $k+W-1$, by brute force. This is done by iterating over all the possible sequence of prices in the window, which are in total $N^{W}$ sequences. In addition, for each sequence of prices, the heuristic accumulates the demand response to these prices in the window which is of size $W$. The expected number of operations to accumulate the demands at each time period is no more than $\sum_{t=1}^{N}\sum_{s=0}^{N-t} \alpha_{s+t}$, as mentioned earlier. Therefore, the expected runtime of the heuristic is $O(K N^{W} \sum_{t=1}^{N}\sum_{s=0}^{N-t} \alpha_{s+t})$.
\end{proof}


\subsubsection{Improvement Over the Greedy}
The look-ahead feature of the Sliding-Window while setting the price at each time period, is expected to provide an improvement in terms of the approximation ratio over the Greedy heuristic, which does not take the future time periods into account. The improvement in the approximation ratio is subject to the window size $W$. The increase in the window size should naturally reduce the approximation ratio of the heuristic. This is due to the fact that larger window size means larger number of look-ahead time periods to be considered, leading to a larger optimization coverage. However, increasing the window size comes with the drawback of reducing the heuristic efficiency in terms of runtime, since the work done on the window increases exponentially with the increase of the window size. Therefore, the choice of the window size should consider a trade-off between the efficiency and the approximation to the optimal solution. Finally, it is worth mentioning that the Greedy heuristic is a special case of the Sliding-Window heuristic, where $W$=1. Also, when $W=K$, the Sliding-Window heuristic is obviously optimal.

\subsection{Uniform Pricing Approach - Theoretical Analysis}
In this section, we study the uniform pricing approach for solving the problem of peak-demand minimization. The approach fixes the price $\lambda$ to a threshold policy $\tau_{t}$ during all time periods. We provide theoretical analysis to show that in the typical configuration of the system, where $K$ is fixed, the approximation ratio of the uniform pricing approach on the average case is constant. We note that our analysis assumes homogeneous demands, in the sense that all consumers in the system have the same amount of demand, denoted by $d$.

\begin{lemma}
\label{l1}
The peak demand of uniform pricing is no more than 
$d \sum_{n=1}^{N}\max\limits_{1\le k \le K}\{A_{n}(k)\}$ if demand is homogeneous.
\end{lemma}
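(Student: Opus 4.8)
The plan is to bound the peak demand of uniform pricing by tracking, for each deadline class $n$, how much total demand from that class can ever be present in the system at one time. Fix the uniform price $\lambda = \tau_t$ for some threshold index $t$. At any time period $k$, the consumers who actually consume are exactly those whose current threshold policy accepts $\lambda$; by the monotonicity assumption on $\tau_t$ (increasing as time-to-go decreases), the set of ``accepting'' time-to-go values is a suffix of $\{1,\dots,N\}$, and in particular every consumer consumes no later than her deadline. So the consumption at time $k$ is at most the total demand of all consumers currently alive (arrived, deadline not yet passed, not yet consumed) at time $k$.

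The key step is to argue that the number of alive consumers with deadline class $n$ present at any single time $k$ is at most $\max_{1\le k'\le K} A_n(k')$. Here ``deadline class $n$'' should be read as: a consumer counted in $A_n(k')$ arrives at time $k'$ and must consume within the next $n$ periods. The claim I would use is that, under uniform pricing, all consumers from a given arrival batch $A_n(k')$ that are going to consume do so in the \emph{same} time period (the first period at which their current threshold crosses $\lambda$, which is a deterministic function of the arrival time, the deadline $n$, and $\lambda$ — it does not depend on $j$ once demands are homogeneous). Hence at any fixed time $k$, the alive-and-about-to-consume population in class $n$ comes from at most one arrival batch, so it is at most $\max_{k'} A_n(k')$ consumers, contributing at most $d\,\max_{k'} A_n(k')$ demand. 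Summing over the $N$ deadline classes gives $u(k,\lambda)\le d\sum_{n=1}^N \max_{1\le k'\le K}\{A_n(k')\}$ for every $k$, hence the same bound on $\max_k u(k,\lambda)$, which is exactly the peak demand of uniform pricing.

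I would write this as: (1) state that uniform pricing with price $\tau_t$ causes a batch arriving at time $k'$ with deadline class $n$ to consume entirely within a single period $k'+\delta$, where $\delta = \delta(k',n,t)$ depends only on when $\tau$ first drops to $\le\tau_t$ along that batch's trajectory — using the monotonicity of $\tau$ and the guarantee of consumption by the deadline from the Assumptions subsection; (2) conclude that $G_{s,t'}(k)$-style groups, when restricted to consumers who consume at time $k$, partition by arrival batch, and at a fixed $k$ at most one batch per deadline class is active; (3) bound that batch's size by $\max_{k'} A_n(k')$ and its demand by $d$ times that; (4) sum over $n=1,\dots,N$ and take the max over $k$.

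The main obstacle is step (1)/(2): making precise the claim that under a \emph{uniform} price an entire arrival batch consumes in one shot, so that distinct batches of the same deadline class cannot pile up at the same time period. This relies crucially on (a) homogeneous demands — so all members of a batch have identical thresholds and behave identically — and (b) the monotonicity of $\tau_t$ in $t$, which makes ``does $\tau$ exceed $\lambda$ yet?'' a monotone predicate along a batch's aging trajectory, so there is a single well-defined first acceptance time. If the argument instead allows members of a batch to spread across periods, one only gets the weaker statement that the \emph{total} class-$n$ demand ever in the system is $d\sum_{k'} A_n(k')$; the lemma's stronger $\max$ bound genuinely needs the one-shot consumption observation, so I would make sure that is stated cleanly before doing the (routine) summation.
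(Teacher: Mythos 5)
Your proof is correct and follows essentially the same route as the paper's: both rest on the observation that under a uniform price $\tau_t$ every arrival batch of deadline class $n$ consumes in a single deterministic period ($n-t$ periods after arrival if $n>t$, immediately otherwise), so at any fixed time at most one batch per class contributes, giving $u(k)\le d\sum_{n=1}^{N}\max_{k}A_n(k)$. The paper simply makes your steps (1)--(2) explicit by computing $u(k)=d\bigl(\sum_{n=1}^{t}A_n(k)+\sum_{n=t+1}^{N}A_n(k+t-n)\bigr)$, which also supplies the injectivity of the map from arrival time to consumption time that your ``at most one batch per class per period'' claim implicitly relies on.
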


\begin{proof}
At each time period $k$, where $N\le k \le K-N+1$, the algorithm sets the price to a threshold policy $\tau_{t}$, causing the consumers in $g_{1}(k)$ to consume their demands, and the consumers in $g_{2}(k)$, to delay their consumption, as mentioned earlier. This accumulates a total consumption of $d (A_{1}(k)+A_{2}(k)+\dots+A_{t}(k))$, given that the demand is $d$ for all consumers. In addition, there are other groups of consumers who could be present at time period $k$, proceeding from the previous time periods. We note that among these consumers, no consumer is present with time to go $v$, where $v < t$, since she would have consumed her demand at an earlier time period, when her time to go was equal to $t$. Also, among the consumers proceeding from the previous time periods, the consumers with time to go $w$, where $w > t$, will obviously delay their consumption, since $\tau_{w} >\tau_{t}$. We are left with the groups of consumers who have time to go $t$, proceeding from the previous time periods. We denote these groups by a third set, namely $g_{3}(k)=\{G_{1,t}(k),G_{2,t}(k),\dots, G_{N-1,t}(k)\}$. Since the threshold policy of these groups is $\tau_{t}$, the consumers in these groups will consume their demands at time period $k$. This accumulates an additional consumption of $d(A_{t+1}(k-1)+A_{t+2}(k-2)+\dots+A_{N}(k-N+t))$. Hence, the total consumption $u(k)$, where $N\le k \le K-N+1$, is $d(\sum_{n=1}^{t} A_{n}(k) + \sum_{n=t+1}^{N} A_{n}(k+t-n))$. Note that for all other time periods $m$, where $m < N$, or $m > K-N+1$, the number of groups with time to go $t$, proceeding from the previous time periods, are obviously less than those at time period $k$. Therefore, neither $u(k)$, nor $u(m)$ can exceed the term $d\sum_{n=1}^{N}\max\limits_{1\le k \le K}\{A_{n}(k)\}$.

\end{proof}


\begin{lemma}
\label{l2}
The peak demand of the optimal algorithm is no less than
$d \sum_{n=1}^{N}\min\limits_{1\le k \le K}\{A_{n}(k)\}$ if demand is homogeneous.
\end{lemma}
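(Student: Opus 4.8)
The plan is to produce a lower bound on the optimal peak demand by exhibiting, for \emph{every} pricing policy, a single time period whose consumption is at least $d\sum_{n=1}^{N}\min_{1\le k\le K}\{A_n(k)\}$. The intuition mirrors Lemma~\ref{l1}: whatever price is chosen at a given period, the arrivals that are forced to consume by their deadline cannot be avoided, and summing an unavoidable contribution across all deadline classes yields the bound. So the first step is to pick a suitable ``target'' period $k^\star$ — I would take a period deep in the interior, say $N\le k^\star\le K-N+1$, so that for each deadline class $n$ the full window of $N$ periods that could feed consumers into $k^\star$ lies inside $\{1,\dots,K\}$; the hypothesis that $N$ is constant (and implicitly $K\ge 2N-1$, the regime the analysis targets) guarantees such a $k^\star$ exists.

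Next I would argue that, regardless of the prices set at periods $k^\star-N+1,\dots,k^\star$, every arrival whose deadline forces consumption at $k^\star$ or earlier must have consumed by period $k^\star$. Concretely, fix a deadline class $n$ and consider the arrivals $A_n(k^\star-n+1)$ that arrive exactly $n-1$ periods before $k^\star$: their deadline is $k^\star$, so under the assumption that all threshold policies guarantee consumption by the deadline, each such consumer has consumed its demand $d$ at some period in $\{k^\star-n+1,\dots,k^\star\}$. That does not by itself place the demand \emph{at} $k^\star$, so the cleaner route is a counting/averaging argument: over the window $\{k^\star-N+1,\dots,k^\star\}$, the total demand that must be consumed somewhere in the window is at least $d\sum_{n=1}^{N}(\text{arrivals whose deadline lies in the window})$, and among the $N$ periods of the window at least one must carry a $1/N$ fraction — but that would only give a bound weaker by a factor $N$. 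To get the stated bound without the $1/N$ loss, I would instead mimic Lemma~\ref{l1} exactly: show that at $k^\star$ the consumption is $d\big(\sum_{n=1}^{t}A_n(k^\star)+\sum_{n=t+1}^{N}A_n(k^\star+t-n)\big)$ for whatever $t$ corresponds to the chosen price (this is the same identity, since it did not use uniformity — it only used that a fixed threshold $\tau_t$ is in force at period $k^\star$ and at the relevant earlier periods; the general case needs a small extra argument that consumers with time-to-go $t$ arriving earlier have by then seen a non-accepting price, which follows from monotonicity of $\tau$). Then bound each $A_n(\cdot)\ge\min_{1\le k\le K}A_n(k)$ term by term, obtaining $u(k^\star)\ge d\sum_{n=1}^{N}\min_{1\le k\le K}\{A_n(k)\}$, and since the optimal peak is $\ge u(k^\star)$ for the optimal policy, the lemma follows.

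The main obstacle is the middle step: in Lemma~\ref{l1} the identity for $u(k)$ relied on a \emph{uniform} price, which let one conclude that a consumer with time-to-go $t$ who arrived earlier never saw an accepting price before and never saw one with smaller time-to-go; under an arbitrary (time-varying) policy this is no longer automatic, so I need to handle consumers who may have already consumed before $k^\star$. The fix is that ``already consumed'' only \emph{removes} demand from $k^\star$, so the clean identity becomes an inequality in the wrong direction — hence the honest argument is the windowed accounting one, and the subtlety is squeezing out the factor $N$. I expect the resolution is to choose $k^\star$ as the argmax over the window of the per-period forced consumption and then re-derive the Lemma~\ref{l1}-style identity at that $k^\star$ using monotonicity of $\{\tau_t\}$ to rule out the problematic early consumptions; making that last implication rigorous is where most of the care goes.
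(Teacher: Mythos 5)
You have correctly located the crux --- under a time-varying price the Lemma~\ref{l1} identity for $u(k^\star)$ fails because consumers can be served \emph{before} $k^\star$, which only subtracts from $u(k^\star)$ --- but your proposal stops exactly there. The windowed-accounting route loses a factor of $N$ (and is itself leaky, since demand can also enter the window from earlier arrivals or leave it for later deadlines), and the suggested repair (``choose $k^\star$ as the argmax over the window and re-derive the identity using monotonicity of $\tau$'') is not an argument: no choice of $k^\star$ rules out early consumption, since an adversarial policy can stagger prices so that each period in the window serves only a few deadline classes, and monotonicity of $\{\tau_t\}$ alone does not force the full sum $\sum_{n=1}^{N}$ onto any single period. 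So as written the proposal has a genuine gap at its decisive step, which you yourself flag.

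The paper closes this gap by a different mechanism: a contradiction argument that tracks the cohort of consumers whose absolute deadline is period $N$, namely $g_{4}=\{G_{N-1,1}(N),\dots,G_{0,1}(N)\}$, i.e.\ the arrivals $A_{N}(1),A_{N-1}(2),\dots,A_{1}(N)$. The key observation is that to serve any part of this cohort at a period $j\le N$ the price must drop to at most $\tau_{N-j+1}$, and by monotonicity this simultaneously triggers every group with smaller time-to-go: the fresh arrivals $A_{1}(j),\dots,A_{N-j+1}(j)$ together with the deferred cohort members $A_{N}(1),\dots,A_{N-j+2}(j-1)$ cover one group from \emph{every} deadline class $1,\dots,N$, so $u(j)\ge d\sum_{n=1}^{N}\min_{1\le k\le K}\{A_{n}(k)\}$. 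Hence if the peak were below the bound, the cohort could never be served by period $N$, contradicting the assumption that threshold policies guarantee consumption by the deadline. This ``all-or-defer'' aggregation is precisely the idea that avoids your factor-$N$ loss, and it is absent from your proposal.
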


\begin{proof}
We prove this lemma by contradiction as follows. Assume that the peak demand is less than $d \sum_{n=1}^{N}\min\limits_{1\le k \le K}\{A_{n}(k)\}$. Then, at time period $N$, there is a set of groups of consumers, namely  $g_{4}=\{G_{N-1,1}(N),G_{N-2,1}(N),\dots,G_{0,1}(N)\}$, who will not consume their demands by their deadline, which is the current time period $N$. This is true, since if they ever consumed their demands, then the peak demand should be at least $d \sum_{n=1}^{N}\min\limits_{1\le k \le K}\{A_{n}(k)\}$, given that the threshold policy is monotonically increasing with time. To prove this argument, we know that the group $G_{N-1,1}(N)$ have arrived at time period 1, with a total demand of $d(A_{N}(1))$. If this group have consumed their demands at time period 1, then $u(1)=d \sum_{n=1}^{N} A_n(1) \ge d \sum_{n=1}^{N}\min\limits_{1\le k \le K}\{A_{n}(k)\}$. Therefore, the group $G_{N-1,1}(N)$ would proceed to time period 2, when the group $G_{N-2,1}(N)$ arrives with a total demand of $d (A_{N-1}(2))$. Since both $G_{N-1,1}(N)$, and $G_{N-2,1}(N)$ have time to go $N-1$ at time period 2, if any of them consumed at time period 2, then $u(2)\ge (d \sum_{n=1}^{N-1} A_n(2))+d(A_N(1)) \ge d \sum_{n=1}^{N}\min\limits_{1\le k \le K}\{A_{n}(k)\}$. One can obviously proceed with this argument until time period $N$, where none of the groups in $g_{4}$ would have or will consume their demands. Having groups of consumers not consuming their demands by their deadline, contradicts with our early assumption that the threshold policies guarantee the consumption of all consumers by their deadline.
\end{proof}


Next, we provide an average case analysis for the approximation ratio of the uniform pricing approach.
More specifically, we are interested in computing the ratio of the expected upper bound provided in lemma \ref{l1}, over the expected lower bound provided in lemma 
\ref{l2}. More formally, we aim to study the value of the following term,

\begin{align*}
R=\frac{E[d\sum_{n=1}^{N}\max\limits_{1\le k \le K}\{A_{n}(k)\}]}{E[d\sum_{n=1}^{N}\min\limits_{1\le k \le K}\{A_{n}(k)\}]}
\end{align*}

\begin{flushleft}
Let $A_{max}^n=\max\limits_{1\le k \le K}\{A_{n}(k)\}$, and  $A_{min}^n=\min\limits_{1\le k \le K}\{A_{n}(k)\}$, we consider the following simplification for $R$, 
\end{flushleft}

\begin{align*}
R = \frac{d\sum_{n=1}^{N}E[A_{max}^n]}{d\sum_{n=1}^{N}E[A_{min}^n]} \le \max\limits_{1\le n \le N}\left\{ \frac{E[A_{max}^n]}{E[A_{min}^n]}\right\}
\end{align*}

\begin{flushleft}
Consequently, we obtain the following theorem, 
\end{flushleft}

\begin{theorem}
The ratio of the expected peak demand of the uniform pricing approach over the expected peak demand of the optimal algorithm is no more than $\max\limits_{1\le n \le N}\{ \frac{E[A_{max}^n]}{E[A_{min}^n]}\}$, when the demand is homogeneous.
\end{theorem}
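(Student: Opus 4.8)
The plan is to take expectations in Lemmas~\ref{l1} and~\ref{l2}, divide one by the other, and then invoke the elementary fact that a ratio of sums of nonnegative numbers is at most the largest ratio of the corresponding terms. Almost all the work has already been done in the two lemmas, so the proof is mostly bookkeeping.

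First I would pass from the pointwise bounds of the two lemmas to bounds on expectations. Let $\mathrm{PEAK}_{\mathrm{unif}}$ and $\mathrm{PEAK}_{\mathrm{opt}}$ denote the (random) peak demands achieved by the uniform-pricing approach and by the optimal algorithm on a given realization of the arrival process. Lemma~\ref{l1} asserts that, on every such realization, $\mathrm{PEAK}_{\mathrm{unif}} \le d\sum_{n=1}^{N} A_{max}^n$; taking expectations and using linearity gives $E[\mathrm{PEAK}_{\mathrm{unif}}] \le d\sum_{n=1}^{N} E[A_{max}^n]$. Symmetrically, Lemma~\ref{l2} gives $\mathrm{PEAK}_{\mathrm{opt}} \ge d\sum_{n=1}^{N} A_{min}^n$ pointwise, hence $E[\mathrm{PEAK}_{\mathrm{opt}}] \ge d\sum_{n=1}^{N} E[A_{min}^n]$. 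Since the arrival means $\alpha_n$ are assumed positive, the event $\{A_n(k)\ge 1 \text{ for all } k\}$ has probability $(1-e^{-\alpha_n})^{K} > 0$, so $E[A_{min}^n] > 0$ and the lower bound $d\sum_{n} E[A_{min}^n]$ is strictly positive. Dividing the two displayed inequalities — which is legitimate precisely because the denominator is positive, and which preserves the direction of the inequality because the upper bound sits in the numerator and the (positive) lower bound in the denominator — yields
\[
\frac{E[\mathrm{PEAK}_{\mathrm{unif}}]}{E[\mathrm{PEAK}_{\mathrm{opt}}]} \;\le\; \frac{d\sum_{n=1}^{N} E[A_{max}^n]}{d\sum_{n=1}^{N} E[A_{min}^n]} \;=\; R .
\]
It then remains only to bound $R$. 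Set $M = \max_{1\le n\le N} E[A_{max}^n]/E[A_{min}^n]$. By definition of $M$ we have $E[A_{max}^n] \le M\, E[A_{min}^n]$ for every $n$; summing over $n$ and cancelling the common factor $d$ gives $\sum_{n} E[A_{max}^n] \le M \sum_{n} E[A_{min}^n]$, i.e.\ $R \le M$. Chaining this with the previous display gives exactly the claimed bound.

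There is no substantial obstacle; the two genuinely delicate points are the ones flagged above, namely (i) checking that dividing by the expected lower bound is valid (it is strictly positive) and direction-preserving, and (ii) the degenerate case in which some $\alpha_n = 0$: then $A_n(k) = 0$ almost surely and both $E[A_{max}^n]$ and $E[A_{min}^n]$ vanish, so one restricts the maximum to indices $n$ with $\alpha_n > 0$ (equivalently adopts the convention $0/0 = 0$), which does not change the conclusion. I would also note, as a remark rather than as part of the proof, that the theorem bounds the ratio by $M$ but does not itself assert $M$ is $O(1)$ in the regime ``$K$ fixed''; establishing the latter requires separate elementary estimates on $E[A_{max}^n]$ and $E[A_{min}^n]$ for Poisson arrivals over $K$ periods.
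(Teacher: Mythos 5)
Your proposal is correct and follows essentially the same route as the paper: take expectations of the bounds in Lemmas~\ref{l1} and~\ref{l2}, form the ratio $R$, and bound a ratio of sums by the maximum termwise ratio. The only additions are your (welcome but minor) checks that the denominator is strictly positive and the remark on the degenerate case $\alpha_n=0$, which the paper passes over silently.
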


\begin{flushleft}
According to \cite{mathMax} and \cite{mathMin},
\end{flushleft}
\begin{align*}
E[A_{max}^n] &= \sum^\infty_{x = 0}\left[1-\left(\sum_{i=0}^x  e^{-\alpha_{n}}\frac{(\alpha_{n})^{i}}{i!}\right)^K\right]\\
E[A_{min}^n] &= \sum_{x=1}^\infty \left(\sum_{i=x}^\infty e^{-\alpha_{n}}\frac{(\alpha_{n})^i}{i!}\right)^K
\end{align*}

Next, we provide empirical analysis for the values of $E[A_{max}^n]$ and $E[A_{min}^n]$, to show that in the typical configuration of the system, where $K$ is fixed, the ratio $R$ is constant. Fig. \ref{AmaxAmin} illustrates the values of $E[A_{max}^n]$ and $E[A_{min}^n]$ for various values of $\alpha_{n}$, and a fixed value of $K$, where $K=100$. Fig. \ref{R} illustrates the value of the ratio ($E[A_{max}^n]/E[A_{min}^n]$) for various values of $\alpha_{n}$, when $K=100$. It can be observed in Fig. \ref{R} that the ratio is a constant less than 2, and decreases with the increase of $\alpha_{n}$. Further, we observe that the decrease in the ratio becomes slower with the increase of $\alpha_{n}$. Therefore, we conclude that in the typical configuration of the system, the ratio $R$ is constant. Finally, it is worth mentioning that the Greedy heuristic described earlier is a special case of the uniform pricing approach, where $\lambda=\tau_{1}$, and hence would have the same theoretical guarantee as above.

%
%
%

\begin{figure}[t]
	\begin{flushleft}
		\begin{subfigure}[t]{0.25\textwidth}
			\centering
			\includegraphics[height=1.4in, width=\linewidth]{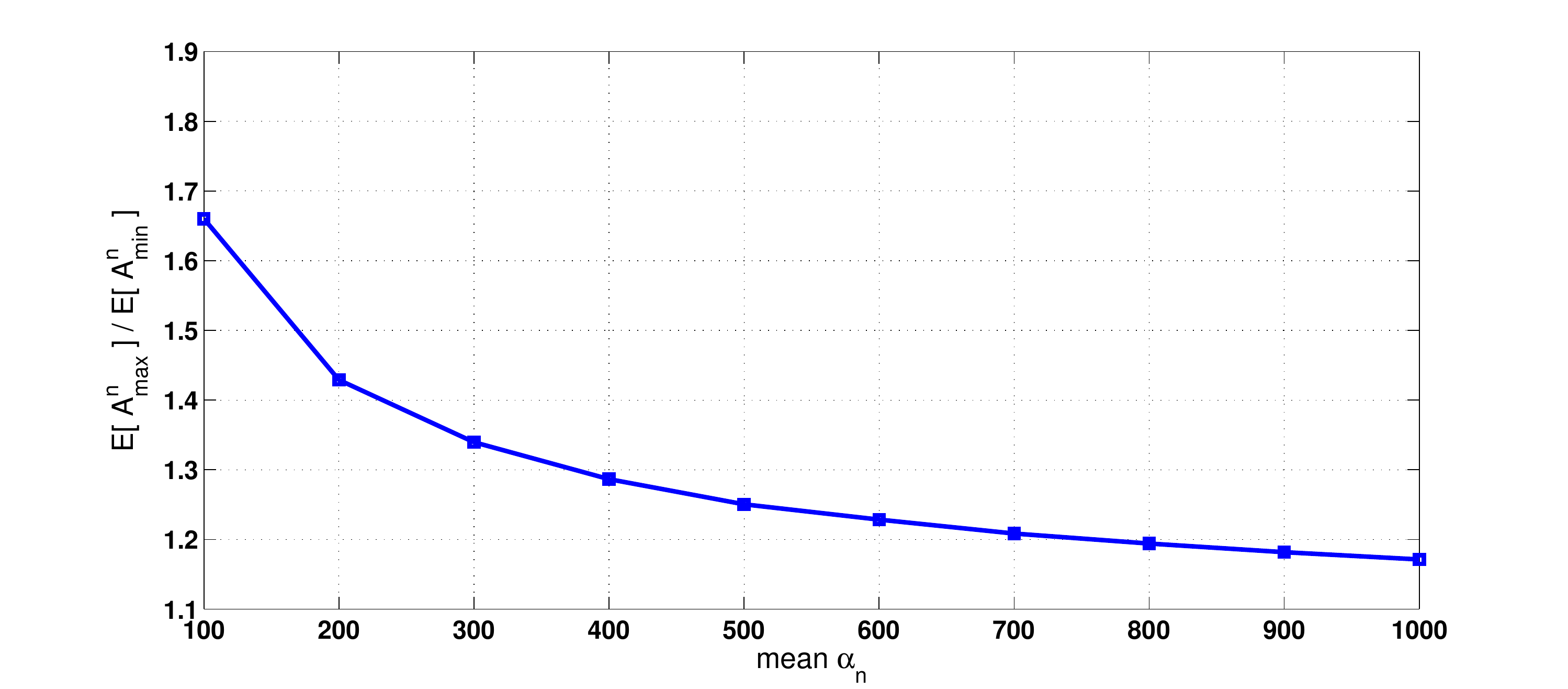}
			\caption{}
            \label{AmaxAmin}
		\end{subfigure}%
		~ 
		\begin{subfigure}[t]{0.25\textwidth}
			\centering
			\includegraphics[height=1.4in, width=\linewidth]{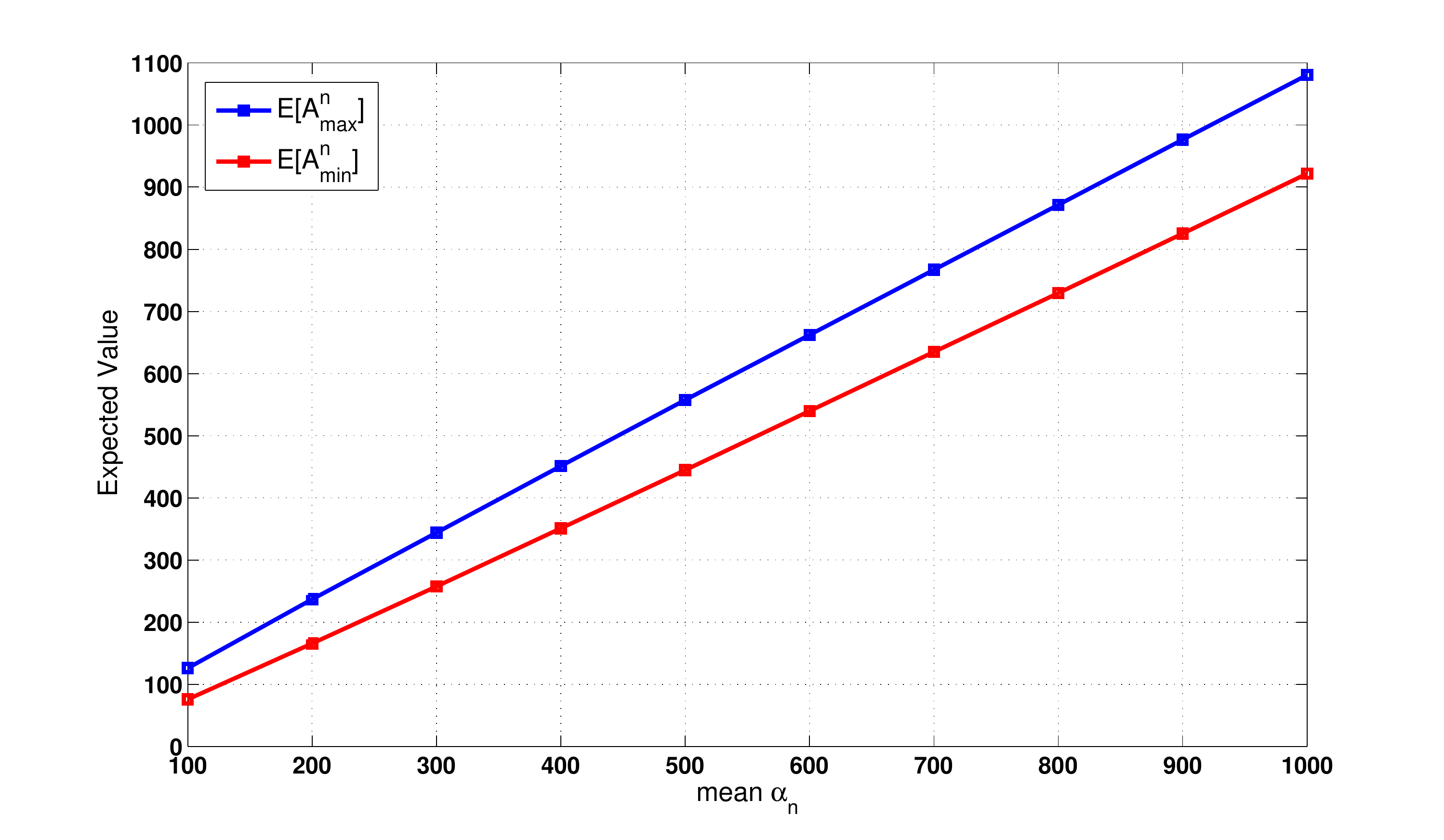}
			\caption{}
            \label{R}
		\end{subfigure}%
		\caption{(a) The values of $E[A_{max}^n]$ and $E[A_{min}^n]$ for various $\alpha_{n}$. (b) $E[A_{max}^n]/E[A_{min}^n]$ for various$\alpha_{n}$. $K=100$.}
	\end{flushleft}
\end{figure}

\section{Optimal Algorithms}
\label{optimal}

\subsection{peak-demand minimization - Modified Dijkstra}

In this section, we propose an optimal algorithm that can be used when $N$ is relatively small, to solve the peak-demand minimization problem. Prioir to describing the algorithm, we present a graph representation of the problem. Then, the algorithm is described, which is a modification of Dijkstra's Shortest Path (SP) algorithm. In addition, we provide a proof of correctness for the proposed algorithm, and analyze its runtime. Further, we discuss the algorithm's drawbacks, compared to the heuristics presented earlier.

\subsubsection{Graph Representation}
We represent the problem as a directed graph $G = (V,E)$: $E\rightarrow \mathbb{R}_+$, where $V$ is the set of vertices, and $E$ is the set of edges with positive weights. The graph representation is depicted in Fig. \ref{pdp_graph}. The vertices are divided into layers, where each layer consists of vertices labeled with all the possible combinations of $N-1$ threshold policies. In addition, a source and destination vertices are added with empty labels. The edges connect every two consecutive layers as the following. The connection is between each two vertices, where the last $N-2$ prices in the label of the first vertex equals the first $N-2$ prices in the label of the second vertex. More formally, every vertex in layer $i$, where $i< K-N+2$, with the label $\tau_{a},\tau_{b}, \dots ,\tau_{y} $ is connected to every vertex in layer $i+1$ with the label $\tau_{b},\dots, \tau_{y},\tau_{z}$, where $a,b,y,$ and $z \in \{1, \dots, N\}$. The weight of the edge is the total consumption at time period $i+N-1$, when the prices in the time interval $[i,i+N-1]$ is set to the threshold policy sequence $\tau_{a}, \tau_{b}, \dots, \tau_{y},\tau_{z}$. In addition, the source vertex is connected to all vertices in layer $1$, with a weight equal to the maximum consumption in the time interval $[1,N-1]$, when the prices in that time interval are set to the prices in the label of the adjacent vertex. Finally, the vertices in the last layer are connected to the destination vertex, with a weight that equals 0.

\begin{figure}
\resizebox {\columnwidth} {!} {%
\hspace*{0cm}\begin{tikzpicture}[ %
   ->, 
   >=stealth,
   shorten >=1pt,
   node distance=1.75cm,
   thick,
   state/.style={%
   fill=white,
   draw=black,
   text=black
  }   
  ]

    \node[state] (s) {$s$};
    
    \node[state] (DOT1) [right of=s,draw=none,node distance=3.5cm] {$\ldots$};
    \node[state] (B1) [above of=DOT1,node distance=2cm] {$\tau_{1},\tau_{1}, \dots,\tau_{2}$};
    \node[state] (A1) [above of=B1,node distance=2cm] {$\tau_{1},\tau_{1}, \dots ,\tau_{1} $};
    \node[state] (Note) [left=0cm, above of=A1,draw=none,node distance=1.5cm,] { Vertex label size= $N-1$};
    \draw [dashed,-] (A1) -- (Note);
    \node[state] (C1) [below of=DOT1,node distance=2cm] {$\tau_{N},\tau_{N}, \dots ,\tau_{N-1} $};
    \node[state] (D1) [below of=C1,node distance=2cm] {$\tau_{N},\tau_{N}, \dots ,\tau_{N} $};
    \node[state] (S1Note) [below of=D1,draw=none,node distance=1.5cm,] { Stage 1};

    \node[state] (DOT2) [right of=DOT1,draw=none,node distance=3.5cm] {$\ldots$};
    \node[state] (B2) [above of=DOT2,node distance=2cm] {$\tau_{1},\tau_{1}, \dots ,\tau_{2} $};
    \node[state] (A2) [above of=B2,node distance=2cm] {$\tau_{1},\tau_{1}, \dots ,\tau_{1} $};
    \node[state] (C2) [below of=DOT2,node distance=2cm] {$\tau_{N},\tau_{N}, \dots ,\tau_{N-1} $};
    \node[state] (D2) [below of=C2,node distance=2cm] {$\tau_{N},\tau_{N}, \dots ,\tau_{N} $};
    \node[state] (S2Note) [below of=D2,draw=none,node distance=1.5cm,] { Stage 2};
    
    \node[state] (DDOT3) [right of=DOT2,draw=none,node distance=2cm] {$\ldots$};
    \node[state] (DDOT2) [above of=DDOT3,draw=none,node distance=2cm] {$\ldots$};
    \node[state] (DDOT1) [above of=DDOT2,draw=none,node distance=2cm] {$\ldots$};
    \node[state] (DDOT4) [below of=DDOT3,draw=none,node distance=2cm] {$\ldots$};
    \node[state] (DDOT5) [below of=DDOT4,draw=none,node distance=2cm] {$\ldots$};

    \node[state] (DOT3) [right of= DDOT3, draw=none,node distance=2cm] {$\ldots$};
    \node[state] (B3) [above of=DOT3,node distance=2cm] {$\tau_{1},\tau_{1}, \dots ,\tau_{2} $};
    \node[state] (A3) [above of=B3,node distance=2cm] {$\tau_{1},\tau_{1}, \dots ,\tau_{1} $};
    \node[state] (C3) [below of=DOT3,node distance=2cm] {$\tau_{N},\tau_{N}, \dots ,\tau_{N-1} $};
    \node[state] (D3) [below of=C3,node distance=2cm] {$\tau_{N},\tau_{N}, \dots ,\tau_{N} $};
    \node[state] (S3Note) [below of=D3,draw=none,node distance=1.5cm,] {Stage $K-N+1$};
    
    \node[state] (DOT4) [right of=DOT3, draw=none,node distance=3.5cm] {$\ldots$};
    \node[state] (B4) [above of=DOT4,node distance=2cm] {$\tau_{1},\tau_{1}, \dots ,\tau_{2} $};
    \node[state] (A4) [above of=B4,node distance=2cm] {$\tau_{1},\tau_{1}, \dots ,\tau_{1} $};
    \node[state] (C4) [below of=DOT4,node distance=2cm] {$\tau_{N},\tau_{N}, \dots ,\tau_{N-1} $};
    \node[state] (D4) [below of=C4,node distance=2cm] {$\tau_{N},\tau_{N}, \dots ,\tau_{N} $};
    \node[state] (S4Note) [below of=D4,draw=none,node distance=1.5cm,] { Stage $K-N+2$};

    \node[state] (t) [right of=DOT4,node distance=3.5cm] {$t$};

\path (s)  edge  node [left=0.05, align=center ]  {$\max\limits_{1\leq k \leq N-1}u(k,\lambda)$}   (A1)
      (s)  edge  node [right=0.2, align=center]  {$\max\limits_{1\leq k \leq N-1}u(k,\lambda)$}  (B1)
      (s)  edge  node [right=0.35, align=center]  {$\max\limits_{1\leq k \leq N-1}u(k,\lambda)$}  (C1)
      (s)  edge  node [left=0.10, align=center]  {$\max\limits_{1\leq k \leq N-1}u(k,\lambda)$}  (D1)
      
      (A1)  edge  node [above=0.2, align=center]  {$u(N,\lambda)$}   (A2)
      (A1)  edge  node [right=0.2, align=center] {$u(N,\lambda)$}     (B2)
      (D1)  edge  node [left=0.2, align=center]  {$u(N,\lambda)$}   (C2)
      (D1)  edge  node [below=0.2, align=center]  {$u(N,\lambda)$}  (D2)
     
      (A3)  edge  node [above=0.2, align=center]  {$u(K,\lambda)$}    (A4)
      (A3)  edge  node [right=0.2, align=center] {$u(K,\lambda)$}   (B4)
      (D3)  edge  node [left=0.2, align=center]  {$u(K,\lambda)$}    (C4)
      (D3)  edge  node [below=0.2, align=center] {$u(K,\lambda)$}    (D4)
      
      (A4)  edge  node [right=0.1, align=center ]  {$0$}   (t)
      (B4)  edge  node [left=0.2, align=center]  {$0$}   (t)
      (C4)  edge  node [left=0.2, align=center]  {$0$}   (t)
      (D4)  edge  node [right=0.1, align=center]   {$0$}   (t)

;
\end{tikzpicture}
}
\caption{Peak demand pricing using a directed graph.}
\label{pdp_graph}
\end{figure}
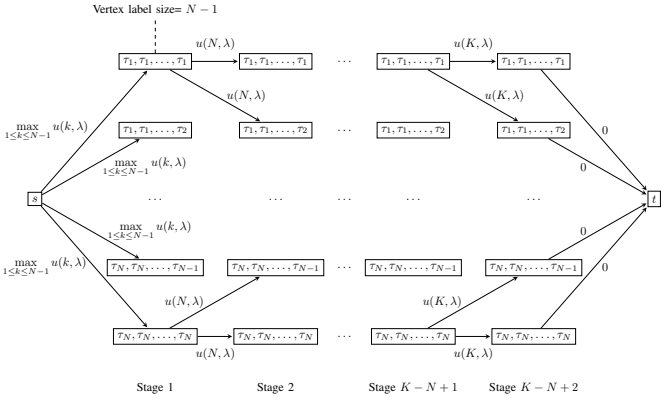

\subsubsection{Algorithm Description}
Using the graph model, the problem of peak-demand minimization is equivalent to the minimax path problem, which has been well studied in the literature \cite{dijkstra_proof},\cite{gluss1961minimax}, and can be solved by modifying the relaxation condition of Dijkstra's SP algorithm. The modification is as follows. Let $d(v)$ denote the cost of the vertex $v$, and $w(u,v)$ denote the weight of the edge connecting the vertices $u$ and $v$. The relaxation condition in Dijkstra's algorithm is,

\setlength{\interspacetitleruled}{-.4pt}%
\LinesNumberedHidden
\begin{algorithm}

    \If{ $d(v) > d(u) + w(u,v)$}
    {
        $d(v)$ = $d(u) + w(u,v)$
    }
\end{algorithm}

\begin{flushleft}
The modification for the relaxation condition is,
\end{flushleft}

\setlength{\interspacetitleruled}{-.4pt}%
\LinesNumberedHidden
\begin{algorithm}

    \If{ $d(v) > max (d(u), w(u,v))$}
    {
        $d(v) = max (d(u), w(u,v))$
    }
\end{algorithm}

After running Dijkstra's algorithm with the above modification, the solution will be stored in the destination vertex, which is a path from the source vertex to the destination vertex. The vertices along this path contains the sequence of prices that minimizes the peak demand, while excluding the repeated prices. The repeated prices are the first $N-2$ prices in the label of the vertices that belong to a stage $>$ 1. 

\begin{theorem}
The modified Dijkstra's algorithm returns the correct and optimal solution of the peak-demand minimization problem, with an expected runtime of $O(K \sum_{t=1}^{N}\sum_{s=0}^{N-t} \alpha_{s+t} + K \log K)$
\end{theorem}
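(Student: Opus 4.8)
The plan is to split the argument into a \emph{correctness/optimality} part and a \emph{runtime} part.

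For correctness, the first step is to show that the layered digraph $G=(V,E)$ of Fig.~\ref{pdp_graph} faithfully encodes the peak‑demand minimization problem $(\ref{pdp_o})$--$(\ref{pdp_c1})$. The key structural observation is that, because every consumer's deadline is at most $N$ periods away and the threshold policies are assumed to guarantee consumption by the deadline, the consumption $u(k,\lambda)$ at any time $k$ is a function of only the prices $\lambda[\max(1,k-N+1)],\dots,\lambda[k]$ in the last (up to) $N$ periods: any consumer who arrived before period $k-N+1$ has already consumed. This is exactly the bookkeeping about which groups $G_{s,t}(k)$ can be present at time $k$ that already underlies Lemmas~\ref{l1} and~\ref{l2}. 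Hence an edge whose endpoints carry an overlapping pair of length‑$(N-1)$ price labels --- which together determine a length‑$N$ window $\tau_a,\dots,\tau_y,\tau_z$ --- has a well‑defined weight $u(i+N-1,\lambda)$; the source edges capture the ramp‑up periods $1,\dots,N-1$ via $\max_{1\le k\le N-1}u(k,\lambda)$, and the zero‑weight sink edges contribute nothing. The overlap condition on consecutive labels forces the shared prices to be consistent, so source‑to‑sink paths are in bijection with price assignments $(\lambda[1],\dots,\lambda[K])$ with $\lambda[k]\in T$, and along the path of a given assignment the maximum edge weight is exactly $\max_{1\le k\le K}u(k,\lambda)$. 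Therefore the optimum of $(\ref{pdp_o})$--$(\ref{pdp_c1})$ equals the value of the \emph{minimax (bottleneck) path} problem on $G$.

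The second step invokes the standard fact \cite{dijkstra_proof,gluss1961minimax} that replacing the relaxation $d(v)\leftarrow d(u)+w(u,v)$ by $d(v)\leftarrow\max(d(u),w(u,v))$ turns Dijkstra's algorithm into a correct solver for the minimax path problem. I would include a one‑paragraph justification: by induction on the extraction order from the priority queue, when a vertex $v$ is finalized, $d(v)$ equals the minimum bottleneck value over all $s$--$v$ paths; the exchange step goes through because for any competing $s$--$v$ path one can locate the first non‑finalized vertex $y$ with finalized predecessor $x$, and the path's bottleneck is $\ge\max(d(x),w(x,y))\ge d(y)\ge d(v)$, using monotonicity of $\max$ in place of monotonicity of addition of nonnegative weights. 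Applying this to $v=t$ gives $d(t)=\OPT$; tracing predecessors recovers an optimal price sequence, and stripping the first $N-2$ entries of every stage‑$>1$ label (which merely repeat prices fixed by the previous vertex) yields $\lambda$.

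For the runtime, treat $N$ as the constant assumed in Section~\ref{model}. Then each of the $O(K)$ stages has $N^{N-1}=O(1)$ vertices and each vertex has out‑degree $N=O(1)$, so $|V|=O(K)$ and $|E|=O(K)$; running the modified Dijkstra with a binary heap costs $O((|V|+|E|)\log|V|)=O(K\log K)$. The remaining cost is forming the edge weights: each is a single value $u(k,\lambda)$ (or, for a source edge, a maximum of $N-1$ such values), obtained by accumulating the demands of the consumers present at the relevant period, whose expected number is at most $\sum_{t=1}^{N}\sum_{s=0}^{N-t}\alpha_{s+t}$ by the Poisson assumption --- exactly the bound used in the proofs for the Greedy and Sliding‑Window heuristics. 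Summing over the $O(K)$ edges gives expected $O\!\left(K\sum_{t=1}^{N}\sum_{s=0}^{N-t}\alpha_{s+t}\right)$ work for the weights, and adding the $O(K\log K)$ search cost gives the claimed bound. The main obstacle is the first step: making precise that $u(k,\lambda)$ depends only on the last $N$ prices and that the overlapping‑label layered graph is genuinely in value‑preserving bijection with all price assignments; once the reduction to minimax paths is in place, the correctness of the modified relaxation and the runtime counting are routine.
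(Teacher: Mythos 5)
Your proposal is correct and follows essentially the same route as the paper: the same observation that $u(k,\lambda)$ depends only on the last $N$ prices, the same value-preserving correspondence between source--sink paths in the layered graph and price assignments, the same reduction to the minimax (bottleneck) path problem solved by the modified relaxation, and the same runtime accounting under the constant-$N$ assumption. The only differences are cosmetic --- you sketch the bottleneck-Dijkstra correctness argument where the paper cites Blum, and you use a binary heap where the paper uses a Fibonacci heap, both yielding the same $O(K\log K)$ bound since $|E|=O(K)$.
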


\begin{proof}
First, we prove the correctness and the optimality of the algorithm as follows. The consumption at any time period $t$ can be completely determined by the prices chosen in the time interval $[t-N+1,t]$, since no consumer arriving at a time period $\leq t-N$ can contribute any demand in period $t$, as her deadline would expire earlier. This fact is the reason behind labeling the vertices with a sequence of $N-1$ prices. The combined prices of two adjacent vertices, defined as the prices in the label of the first vertex followed by the last price in the label of the second vertex, are in total $N$ prices that can determine the consumption of the time period that corresponds to the $N$-th price. This proves that any edge connecting stage $i$ to stage $i+1$ can be weighted with the correct consumption at time period $i+N-1$, given only the prices of the two adjacent vertices. In addition, recall that the edges connecting the source node to stage $1$ are weighted with the maximum consumption in the time interval $[1,N-1]$. This weighting is due to the fact that only the maximum consumption (peak demand) is what matters along the optimal path. Additionally, this weighting can be obviously determined given the $N-1$ prices in stage $1$. To show that a total of $K-N+2$ stages is needed in the graph, recall that the vertices in stage $1$ hold $N-1$ prices that correspond to the first $N-1$ time periods. The remaining $K-N+1$ time periods are represented by the remaining stages. Since each of these stages adds one price that corresponds to one time period, the total number of the remaining stages is $K-N+1$. Adding the first stage to these stages, the total number of stages in the graph is indeed $K-N+2$. This fact also justifies the 0 weighting of the edges between the last stage and the destination vertex, since no more consumption is to be determined.

Given the above proof, it is obvious that the paths in the graph between the source and destination vertices represent all the possible sequences of prices and their consequent consumption, in the time interval $[1,K]$. We are interested in the sequence of prices that minimizes the peak demand. In other words, the problem is to find a path from the source vertex to the destination vertex such that the maximum weight along the path is minimized. To show that the modified Dijkstra's algorithm solves this problem optimally, we refer to the work presented by A. Blum \cite{dijkstra_proof}, who proved that our described algorithm is optimal.

Next, we prove that the algorithm has an expected runtime of $O(K \sum_{t=1}^{N}\sum_{s=0}^{N-t} \alpha_{s+t} + K \log K)$ as follows. The total number of combinations of $N-1$ threshold policies is $N^{N-1}$, since there are $N$ threshold policies, and the size of each combination is $N-1$. Therefore, the algorithm creates $N^{N-1}$ vertices at each layer. The total number of layers is $K-N+2$ layers. In addition, the algorithm creates a source and destination vertices. Therefore, the total number of vertices to be created is $2+(N^{N-1})(K-N+2)$. In addition to the vertices, the algorithm connects the source vertex to every vertex in layer $1$, which is a total of $N^{N-1}$ vertices. Also, every vertex in layer $i$, where $i< K-N+2$, is connected to $N$ vertices in layer $i+1$, resulting in a total of $N^{N}$ total of edges for each layer $i$. Moreover, every vertex in layer $K-N+2$ is connected to the distention vertex, resulting in $N^{N-1}$ additional edges. Therefore, the total number of edges to be created is $2 N^{N-1}+ (N^{N})(K-N+1)$ edges. Furthermore, the weighting of each edge (while ignoring the edges to the destination vertex) requires the accumulation of demands in $N$ time periods that correspond to the prices in the adjacent vertices. We recall that the expected number of operations to accumulate the demands at each time period is no more than $\sum_{t=1}^{N}\sum_{s=0}^{N-t} \alpha_{s+t}$. Therefore, the expected number of operations to weight each edge is no more than $N\sum_{t=1}^{N}\sum_{s=0}^{N-t} \alpha_{s+t}$. Based on the above analysis, and assuming that $N$ is constant, it follows that the expected number of operations required for constructing the graph is $O(K \sum_{t=1}^{N}\sum_{s=0}^{N-t} \alpha_{s+t})$.

So far, we calculated the expected runtime required for constructing the graph. Once the graph is constructed, the modified Dijkstra's algorithm is run on the graph. According to \cite{fredman1987fibonacci}, the most efficient Dijkstra's algorithm on a directed graph with positive weight edges, is the one where the priority queue is a Fibonacci heap, and has a time complexity of $O(|E| + |V| \log |V|)$. By substituting our early calculations, and assuming that $N$ is constant, the time complexity of the modified Dijkstra's algorithm is $O(K \log K)$. Consequently, It follows that the overall expected runtime required for constructing the graph and running the modified Dijkstra's algorithm is  $O(K \sum_{t=1}^{N}\sum_{s=0}^{N-t} \alpha_{s+t} + K \log K)$.
\end{proof}


\subsubsection{Drawbacks}
One of the main drawbacks of the modified Dijkstra's algorithm is the high memory requirement. It can bee seen that the total number of required vertices to be stored is $2+(N^{N-1})(K-N+2)$. In addition, the total number of edges in the graph is $2 N^{N-1}+ (N^{N})(K-N+1)$. Since both of these values are exponential in $N$, we expect a fast growth in the memory requirement, when $N$ increases. TABLE \ref{sp_table} illustrates the memory requirement for a fixed time horizon $K=24$, and various values of $N$. The memory requirement reaches $4.76\times10^{18}$ total of vertices and edges, when $N=15$. We note that no such memory requirement is needed in the heuristics and the uniform pricing approach described earlier, since they do not construct a graph to solve the problem.

\begin {table}[t]
\setlength\extrarowheight{2pt}
\caption {Memory requirements for the modified Dijkstra ($K=24$). $|V|$ and $|E|$ are the number of vertices and edges.}
\label{sp_table}

\begin{center}
\begin{tabular}{ | C{1cm} | C{2cm} | C{2cm} | C{2cm} |  }
 \hline

  \textbf{N} & $|V|$ & $|E|$ & $|V|+|E|$ \\
  \hline
  \textbf{3}   &  209	 & 612 & 821  \\
  \hline
 \textbf{6}  & 155522 & 902016 &  1057538 \\
 \hline
 \textbf{9} 	& 7.32$\times10^{8}$ &	6.28$\times10^{9}$ & 7.02$\times10^{9}$  \\
 \hline
 \textbf{12} 	& 1.04$\times10^{13}$ &	1.17$\times10^{14}$ & 1.28$\times10^{14}$ \\
 \hline
 \textbf{15} 	& 3.21$\times10^{17}$ &	4.44$\times10^{18}$ & 4.76$\times10^{18}$ \\
 \hline

\end{tabular}
\end{center}
\end {table}

Another drawback of the modified Dijkstra's algorithm is related to its runtime. Although the expected runtime of the algorithm is $O(K \sum_{t=1}^{N}\sum_{s=0}^{N-t} \alpha_{s+t} + K \log K)$ (assuming $N$ is constant), the total number of operations performed by the algorithm is exponential in $N$. 
This causes the algorithm to become rapidly slow, with the increase of $N$. Unlike the modified Dijkstra's algorithm, the exponential term $W$ in the runtime of the Sliding-Window heuristic can be toned to avoid slowing down the heuristic. In addition, the Greedy heuristic and the uniform pricing approach do not perform exponential number of operations, based on our early analysis. Therefore, the methods presented earlier are obviously faster than the modified Dijkstra's algorithm.

It follows from the drawbacks discussed in this section that the modified Dijkstra's algorithm achieves the optimality on the expense of a high memory requirement and runtime. On the other hand, the heuristics presented earlier sacrifice optimality for the memory and speed. Consequently, we conclude that the modified Dijkstra's algorithm is suitable in practice, when the parameter $N$ is relatively small.

\subsection{Matching Demand to Supply - Dijkstra's Shortest Path}
In this section, we propose an optimal algorithm, which is similar to the modified Dijkstra's algorithm presented in the previous section, and can be used when $N$ is relatively small, to solve the problem of matching demand to supply. We initially present a modification for the graph representation presented in the previous section. Then, the algorithm is described, which is the standard Dijkstra's SP algorithm. In addition, we provide a proof of correctness for the proposed algorithm, and analyze its runtime and drawbacks.

\subsubsection{Graph Representation}
In this section, we present a modification for the directed graph presented in the previous section, Fig. \ref{pdp_graph}, to represent the problem of matching demand to supply. The modification is in the weighting of the edges as follows. Let $MSE_{[i,j]}$ denotes the MSE in the time interval $[i,j]$. An edge connecting a vertex in stage $i$ to a vertex in stage $i+1$ is weighted with $MSE_{[i+N-1,i+N-1]}$, when the prices in the time interval $[i,i+N-1]$ is set to the combined prices of the adjacent vertices. In addition, the source vertex is connected to all vertices in stage $1$, with a weight equals $MSE_{[1,N-1]}$, when the prices in the time interval $[1,N-1]$ is set to the prices in the label of the adjacent vertex. The rest of the graph representation remains unchanged.

\subsubsection{Algorithm Description}
Using the graph model, the problem of matching demand to supply can be solved using the standard Dijkstra's SP algorithm. After running Dijkstra's algorithm, the solution will be stored in the destination vertex, which is a path from the source vertex to the destination vertex. The vertices along this path contains the sequence of prices that matches demand to supply, while excluding the repeated prices. We recall that the repeated prices are the first $N-2$ prices in the label of the vertices that belong to a stage $>$ 1.

\begin{theorem}
Dijkstra's Shortest Path algorithm returns the correct and optimal solution of the problem of matching demand to supply.
\end{theorem}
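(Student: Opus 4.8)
The plan is to mirror the correctness argument for the modified Dijkstra's algorithm of the previous section, but to replace the bottleneck (minimax) reasoning by the observation that the matching objective decomposes \emph{additively} over the edges of the graph, so that the \emph{standard} shortest-path algorithm applies. First I would re-establish, exactly as in the peak-demand case, that the consumption $u(t,\lambda)$ at any time period $t$ is completely determined by the prices chosen in the window $[t-N+1,t]$: a consumer arriving at a time $\le t-N$ has a deadline strictly before $t$ and contributes nothing at $t$. This is what justifies labelling each vertex with a sequence of $N-1$ prices and connecting a vertex in stage $i$ to one in stage $i+1$ only when their labels overlap in $N-2$ positions, so that the combined label of two adjacent vertices specifies $N$ consecutive prices and hence determines $u(i+N-1,\lambda)$. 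Consequently the $s$--$t$ paths in the graph are in bijection with the price sequences $(\lambda(1),\dots,\lambda(K))$ over $\{\tau_1,\dots,\tau_N\}$, and the path has $K-N+2$ stages precisely because stage $1$ fixes the first $N-1$ periods and each later stage adds one new period.

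Next I would verify that the length (sum of edge weights) of the path corresponding to a price sequence equals the objective value (\ref{obj_dsm}) of that sequence up to the fixed positive factor $K$. Reading $MSE_{[i,j]}$ as the (unnormalized) sum $\sum_{k=i}^{j}(u(k,\lambda)-S(k))^2$, the source-to-stage-$1$ edge carries $\sum_{k=1}^{N-1}(u(k,\lambda)-S(k))^2$, each stage-$i$-to-stage-$(i+1)$ edge carries the single term $(u(i+N-1,\lambda)-S(i+N-1))^2$, and the last-stage-to-destination edges carry $0$. Because the graph has $K-N+2$ stages, these contributions partition the time periods $\{1,\dots,K\}$ with no overlap and no omission, so summing them along any path yields $\sum_{k=1}^{K}(u(k,\lambda)-S(k))^2 = K\cdot(\ref{obj_dsm})$. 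Multiplying by the positive constant $K$ does not change the minimizer, so a minimum-length $s$--$t$ path corresponds to an optimal pricing.

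Finally I would invoke correctness of Dijkstra's shortest-path algorithm: every edge weight is either a squared error or $0$, hence nonnegative, so the standard algorithm returns a minimum-length path from $s$ to $t$; reading off the prices along that path and discarding the $N-2$ repeated prices in the labels of stages $>1$ gives the optimal price sequence over $[1,K]$. The one point that needs genuine care — and the main thing to get right — is the bookkeeping of the previous paragraph: one must check that the window decomposition assigns exactly one edge to each time period, that the first $N-1$ periods are all absorbed into the single source edge, and that the normalization convention for $MSE_{[\cdot,\cdot]}$ is chosen so that path length is a fixed positive multiple of (\ref{obj_dsm}); once that is in place, everything else follows from the additive structure of the objective and the nonnegativity of the edge weights. (A runtime bound analogous to the peak-demand case can be appended, since the graph has the same size.)
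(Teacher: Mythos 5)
Your proof follows essentially the same route as the paper's: the same window/overlap argument establishing that adjacent vertex labels determine $u(i+N-1,\lambda)$ and hence that $s$--$t$ paths are in bijection with price sequences, followed by an appeal to the correctness of standard Dijkstra's algorithm on the additively weighted graph with nonnegative weights. The only difference is that you make explicit the bookkeeping the paper leaves implicit --- that the edge weights partition the $K$ time periods exactly once and that $MSE_{[\cdot,\cdot]}$ must be read as an unnormalized sum of squared errors so that path length is a fixed positive multiple of the objective in (\ref{obj_dsm}) --- which is a worthwhile clarification but not a different argument.
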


\begin{proof}
Recall that the consumption at any time period $t$ can be completely determined by the prices chosen in the time interval $[t-N+1,t]$. Therefore, the combined prices of two adjacent vertices, are in total $N$ prices that can determine the consumption of the time period that corresponds to the $N$-th price. This proves that any edge connecting stage $i$ to stage $i+1$ can be weighted with the correct value of $MSE_{[i+N-1,i+N-1]}$, given only the prices of the two adjacent vertices. In addition, recall that the edges connecting the source vertex to stage $1$ are weighted with $MSE_{[1,N-1]}$. This weighting can be obviously determined given the $N-1$ prices in stage $1$. To show that a total of $K-N+2$ stages is needed, and justify the 0 weighting of the edges connected to the destination vertex, we refer to the proof of correctness provided in the previous section.

Given the above proof, it is obvious that the paths in the graph between the source and destination vertices represent all the possible sequences of prices and their consequent MSE, in the time interval $[1,K]$. We are interested in the sequence of prices that minimizes MSE. In other words, the problem is to find the shortest path from the source vertex to the destination vertex. To show that Dijkstra's SP algorithm solves this problem optimally, we refer to the work presented by A. Blum \cite{dijkstra_proof}, who proved that Dijkstra's algorithm is optimal.
\end{proof}

\subsubsection{Time Complexity \& Drawbacks}

The only differences between the modified Dijkstra's algorithm (previous section) and the Dijkstra's SP algorithm presented in this section are in the weighting of the edges and in the relaxation condition. For both algorithms, the weighting of the edges requires the same number of operations to accumulate the consumers' demand. Also, the number of operations required in the relaxation condition of the edges is constant for both algorithms. Therefore, the SP algorithm has the same expected runtime as the modified Dijkstra's algorithm. Also, it is obvious that both algorithms have the same memory requirements.

Recall that the drawbacks of the modified Dijkstra's algorithm are the high memory requirement and its runtime. Since the SP algorithm has the same expected runtime and memory requirements as the modified Dijkstra's algorithm, we conclude the same drawbacks for the SP algorithm. Consequently, the SP algorithm is only suitable when the parameter $N$ is relatively small. This claim is supported by the experimental results present in the next section.

\section{Experiments}
\label{experiments}
In this section, we present our experimental work for the proposed algorithms. We compare the approximation ratio of the proposed heuristics for the problems of peak-demand minimization and matching demand to supply. In addition, we compare the runtime of Dijkstra's algorithm with that of the proposed heuristics. Throughout the experiments, we vary the parameters $K$ and $N$ while recording the average of 30 experiments for each parameter value.

\subsection{Algorithms Implementation}

We implemented our heuristics using Java along with the CPLEX API to compute the heuristics approximation ratio. We also implemented Dijkstra's algorithm in Java, with a Fibonacci heap data structure as the priority queue. For some of the experiments (when $N$ is small), we used our implementation for Dijkstra's algorithm to verify the heuristics approximation ratio computed by CPLEX.

\subsection{Experimental Data}

We used real appliances data available on Pecan Street's Dataport website \cite{dataport}. The used data consists of the power demand of 64 appliances' jobs during 100 time periods. Using this data while randomly generating the deadline of the jobs, we evaluated the performance of the proposed algorithms in terms of approximation ratio and runtime. We now describe the experiments and results in detail.

\subsection{Heuristics Performance - Peak-Demand Minimization}
We conducted separate experiments for when the consumers' composition was both known and unknown a priori. We used a constant value of $N=3$. In each experiment, we considered three versions of the Sliding-Window heuristic. These versions use window sizes of $N$, $2N$, and $N^{2}$, respectively. We ran our simulations for time horizons in the range $K=[3,100]$. For each time horizon, we recorded the average approximation ratio of each heuristic. 

\subsubsection{Experiment 1: Known Consumer Composition}
We examined the heuristics' performances when the number of arrivals and their demands are known at all times. As mentioned earlier, we ran the experiment for various time horizons, and recorded the average approximation ratio of the heuristics.

\subsubsection{Experiment 2: Unknown Consumer Composition}
In this condition, we examined the performance of the heuristics when the number of future arrivals and their demands were unknown, meaning the number of arrivals and their demands were known only when the consumers arrive. We ran the Greedy heuristic and the uniform-pricing approach for the case when no estimation for the future arrivals and demands was made, as they optimize each time period without considering the future demands. Then, we used the estimator proposed by M. Ohannessian et al.~\cite{ohannessian2014dynamic} to estimate the future arrival and demand rates. These rates were then used by the Sliding-Window heuristic for future time periods.

\subsubsection{Experimental Results}
The results of Experiment 1 and 2 are illustrated in Fig. \ref{pdp_homo_offline} and \ref{pdp_homo_online}, correspondingly. For both experiments, the Sliding-Window heuristic outperformed the Greedy heuristic and the uniform pricing approach. In the first experiment, the Sliding-Window heuristic recorded an overall average approximation ratio of 1.13 when using a window size of $N$, 1.04 when using a window size of $2N$, and 1.01 when using a window size of $N^{2}$. In the second experiment, the Sliding-Window heuristic recorded an overall average approximation ratio of 1.14 when using a window size of $N$, 1.06 when using a window size of $2N$, and 1.02 when using a window size of $N^{2}$. On the other hand, the Greedy heuristic had an overall average approximation ratio of 1.2, and the uniform pricing approach had an overall average approximation ratio of 1.15, in both experiments. It follows that with the increase of the window size, the approximation ratio of the Sliding-Window heuristic is approaching 1.0. This is consistent with our early predictions that a larger window size means larger optimization coverage, which consequently leads to a lower approximation ratio. Nevertheless, with the heterogeneity of demands, the estimator performance drops as highlighted by M. Ohannessian et al. \cite{ohannessian2014dynamic}, which explains the slight increase in the approximation ratio of the Sliding-Window heuristic in the second experiment.

\begin{figure*}[t!]
	\begin{flushleft}
		\begin{subfigure}[t]{0.25\textwidth}
			\centering
			\includegraphics[height=1.4in, width=\linewidth]{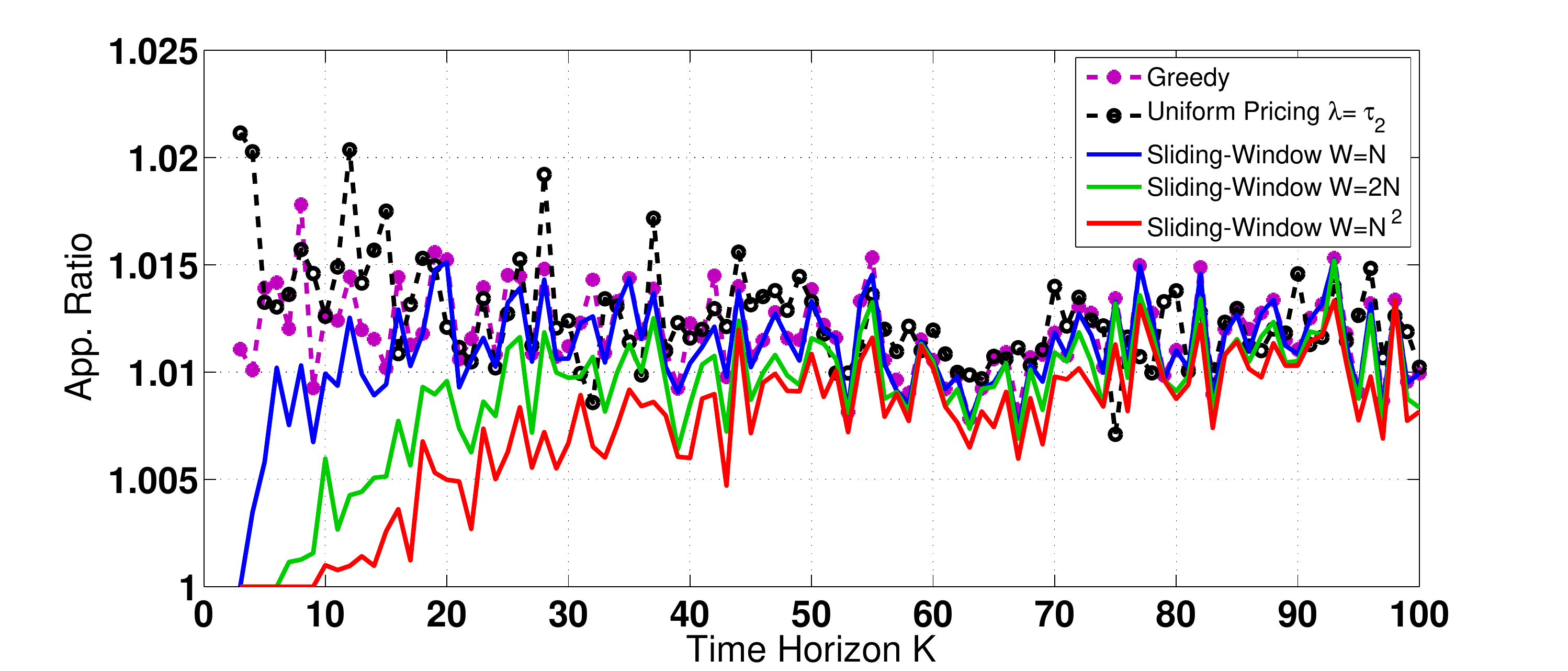}
			\caption{}
			\label{pdp_homo_offline}
		\end{subfigure}%
		~ 
		\begin{subfigure}[t]{0.25\textwidth}
			\centering
			\includegraphics[height=1.4in, width=\linewidth]{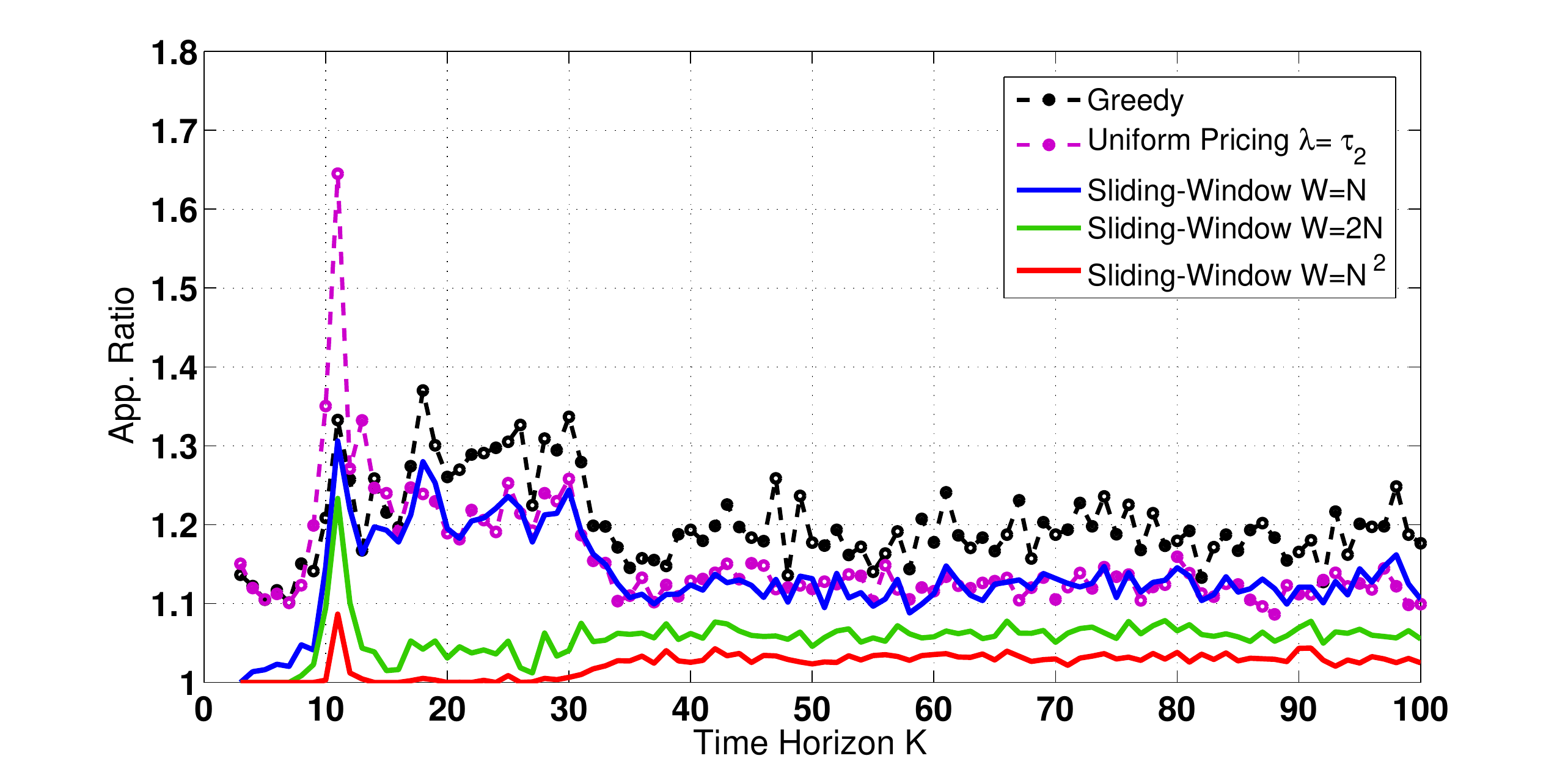}
			\caption{}
			\label{pdp_homo_online}
		\end{subfigure}%
		~
		\begin{subfigure}[t]{0.25\textwidth}
			\centering
			\includegraphics[height=1.4in, width=\linewidth]{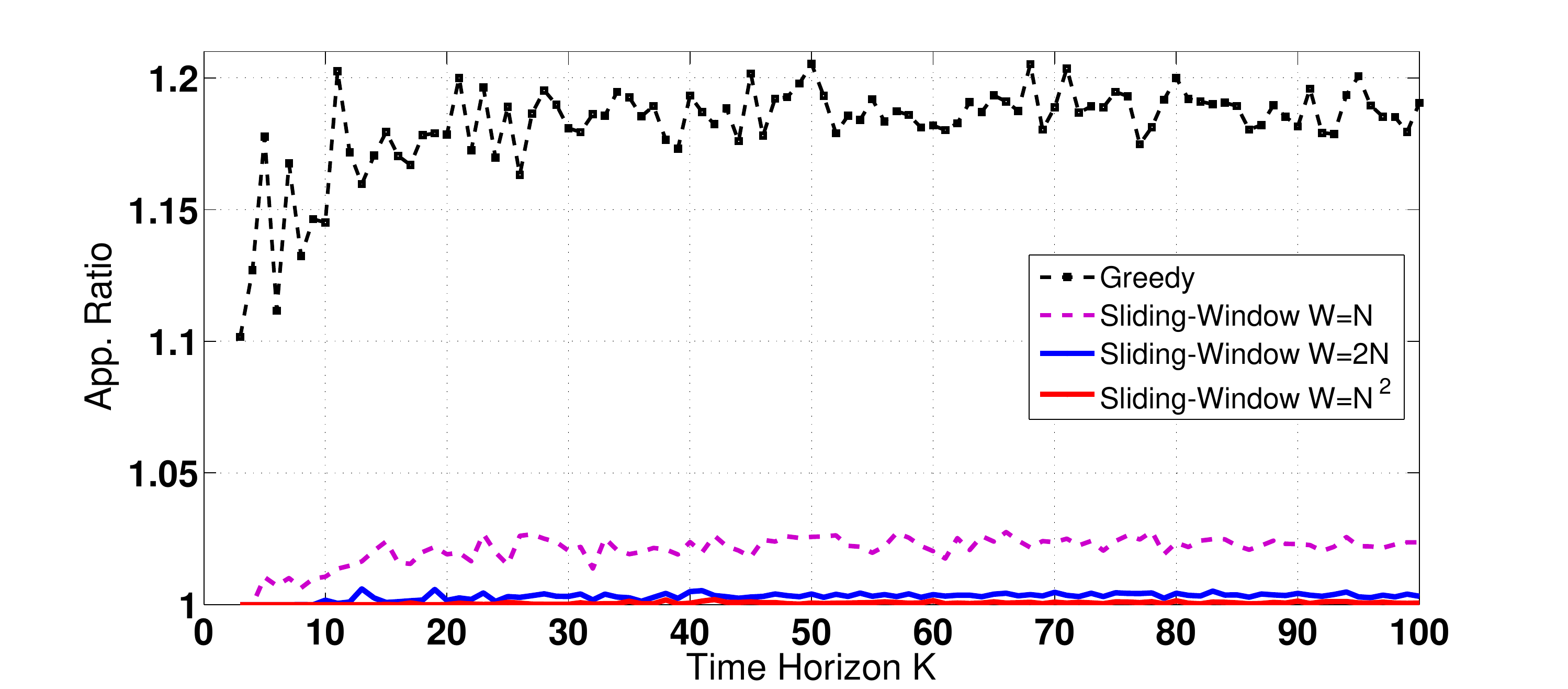}
			\caption{}
			\label{dsm_homo_offline}
		\end{subfigure}%
		~
		\begin{subfigure}[t]{0.25\textwidth}
			\centering
			\includegraphics[height=1.4in, width=\linewidth]{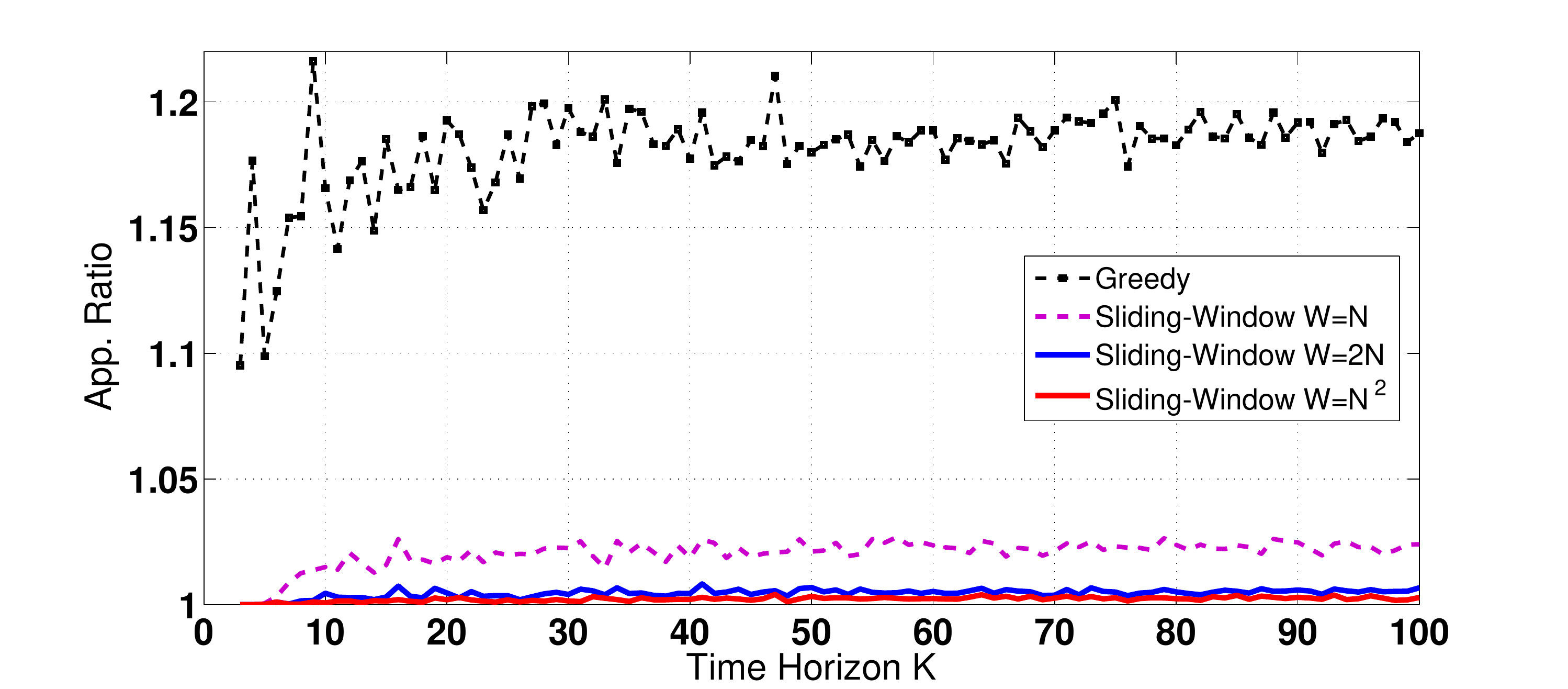}
			\caption{}
			\label{dsm_homo_online}
		\end{subfigure}
		
		\caption{Heuristic approximation ratios for peak-demand minimization given (a)~known {\em a priori} and (b) unknown consumer composition.  Heuristic approximation ratios for matching demand to supply given (c)~known {\em a priori} and (d) unknown consumer composition.   A sliding window was used to estimate arrival and demand rates given unknown consumer composition.}
	\end{flushleft}
\end{figure*}

\subsection{Heuristics Performance - Matching Demand to Supply}
In this section, we examine the performance of the the two generic heuristics proposed earlier, for solving the problem of matching demand to supply. As in the previous section, we conducted two different experiments based on whether the consumers' composition is known beforehand. Also, we used the same values for the parameters $K$,$N$, and $W$, and recorded the average approximation ratio of each heuristic.

The results of the two experiments are illustrated in Fig. \ref{dsm_homo_offline} and \ref{dsm_homo_online}. For both experiments, the Sliding-Window heuristic outperformed the Greedy heuristic. In the first experiment, the Sliding-Window heuristic recorded an overall average approximation ratio of 1.08 when using a window size of $N$, 1.01 when using a window size of $2N$, and 1.001 when using a window size of $N^{2}$. In the second experiment, the Sliding-Window heuristic recorded an overall average approximation ratio of 1.17 when using a window size of $N$, 1.19 when using a window size of $2N$, and 1.21 when using a window size of $N^{2}$. On the other hand, the Greedy heuristic had an overall average approximation ratio of 1.42 in both experiments. These results are consistent with the results of the previous section, as they show that with the increase of the window size, the approximation ratio of the Sliding-Window heuristic approaches 1.0. Yet, when the consumers' composition includes heterogeneous demands and is estimated, the estimator's performance drops.  This leads to an increase in the approximation ratio of the Sliding-Window heuristic.

\begin{figure*}[t]
\vspace{-.05in}
\begin{flushleft}
\begin{subfigure}[t]{0.25\textwidth}
\centering
\includegraphics[height=1.4in, width=\linewidth]{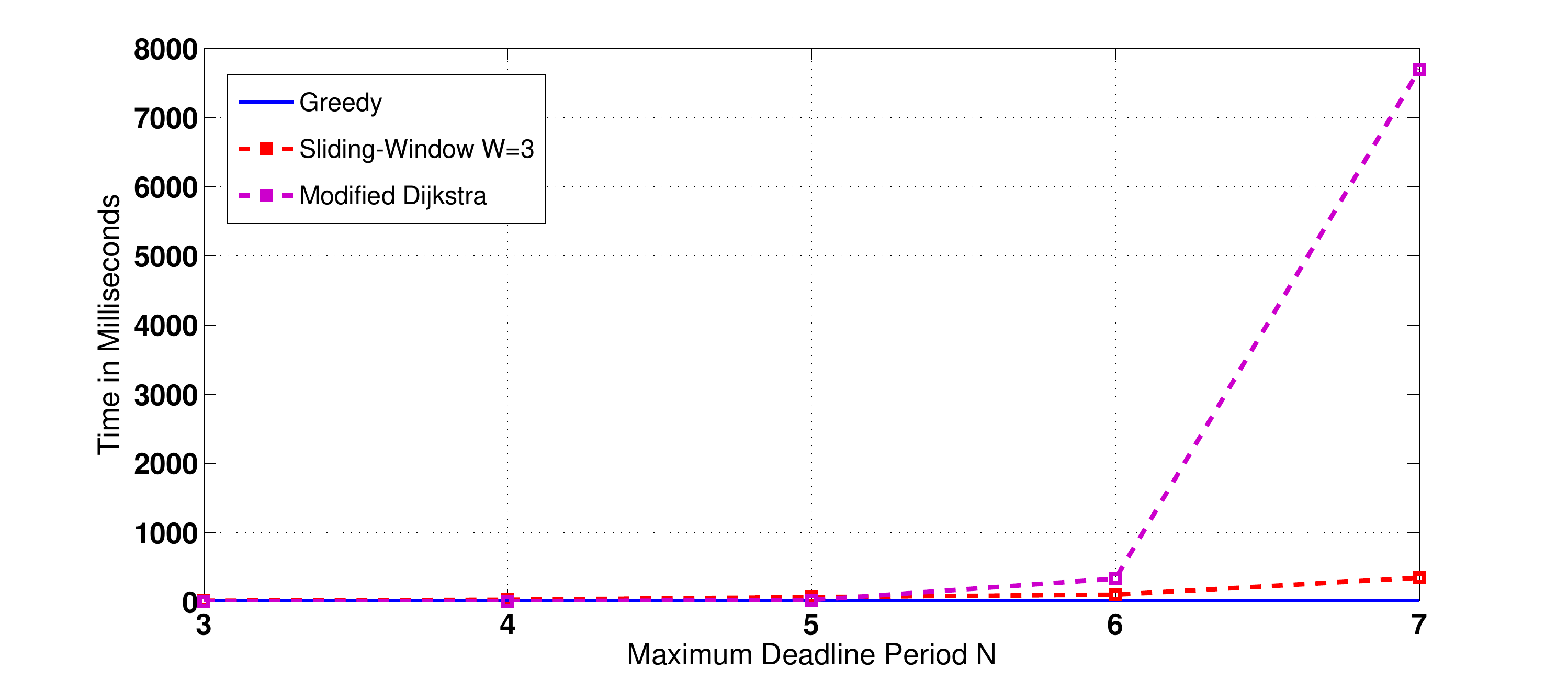}
\caption{}
\label{pdp_homo_time}
\end{subfigure}%
~ 
\begin{subfigure}[t]{0.25\textwidth}
\centering
\includegraphics[height=1.4in, width=\linewidth]{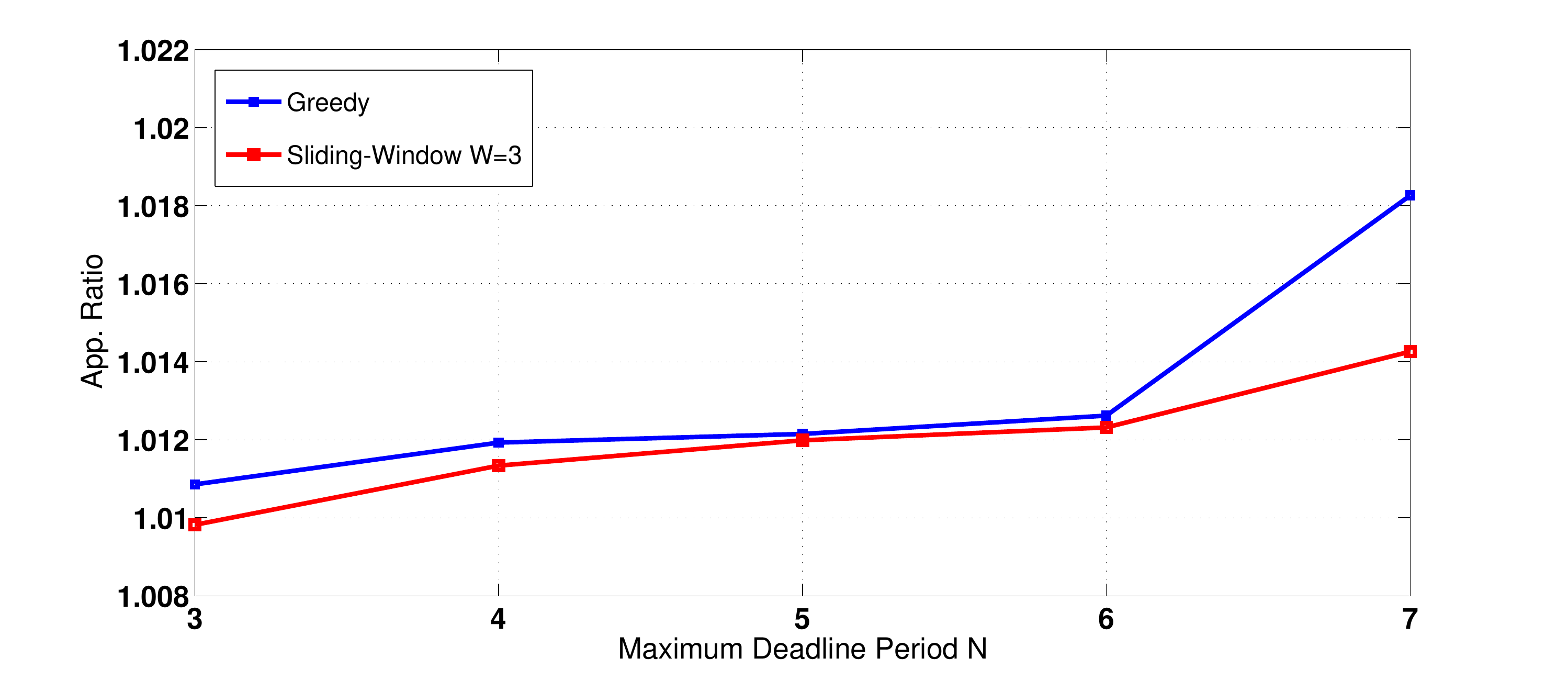}
\caption{}
\label{pdp_homo_ratio}
\end{subfigure}%
~
\begin{subfigure}[t]{0.25\textwidth}
\centering
\includegraphics[height=1.4in, width=\linewidth]{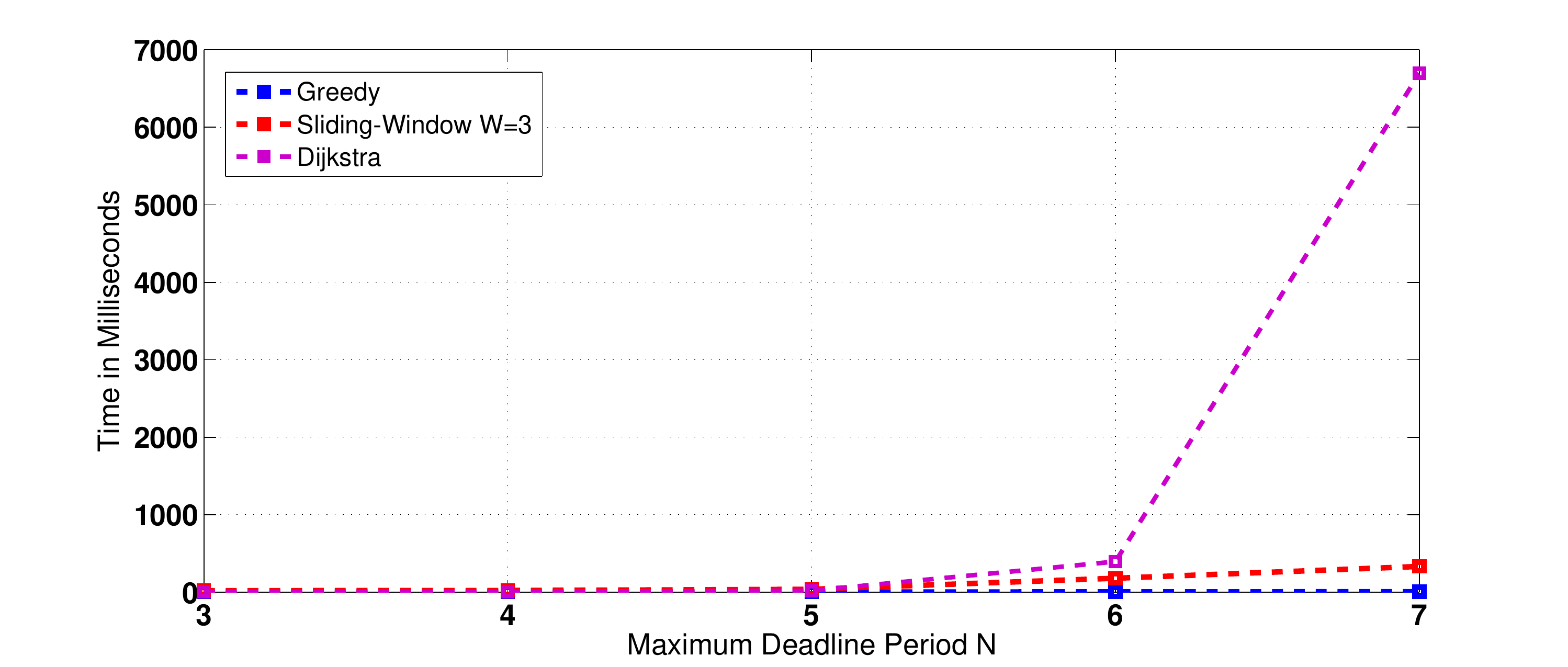}
\caption{}
\label{dsm_homo_time}
\end{subfigure}%
~
\begin{subfigure}[t]{0.25\textwidth}
\centering
\includegraphics[height=1.4in, width=\linewidth]{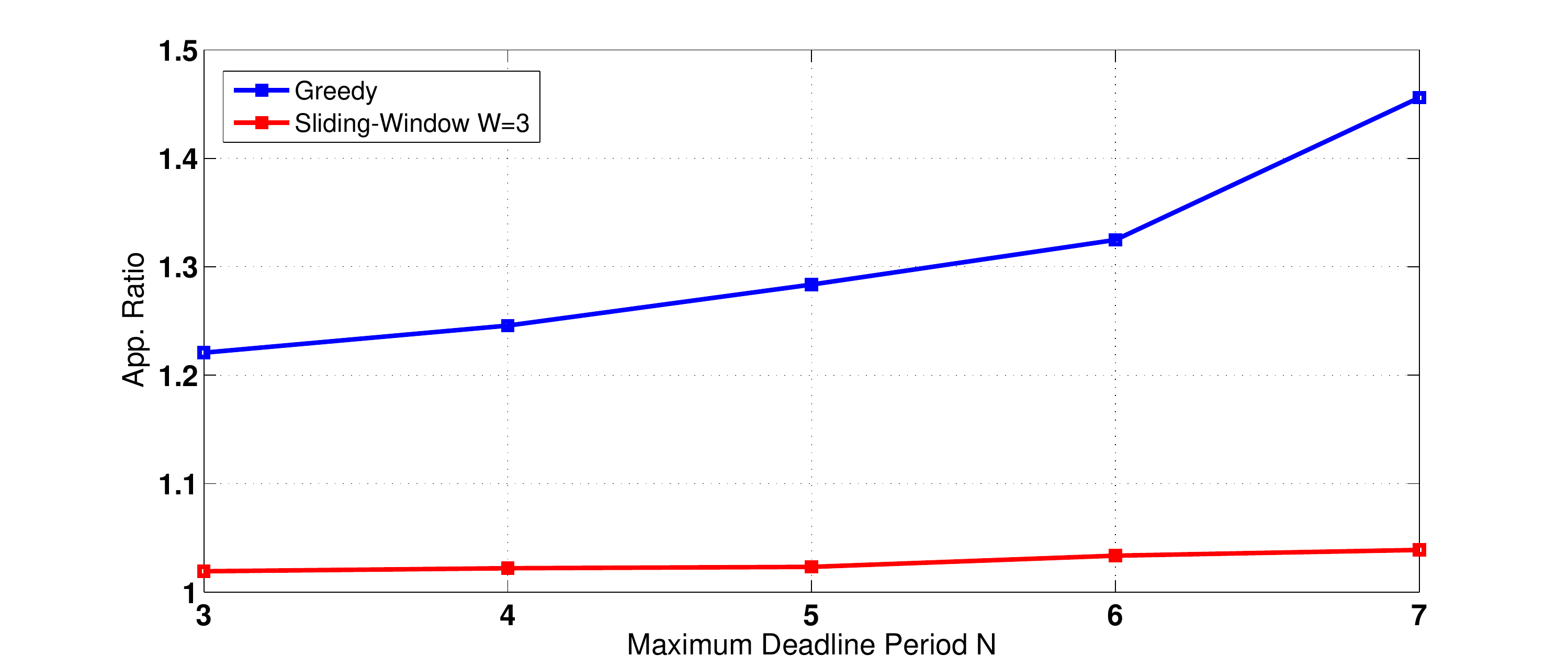}
\caption{}
\label{dsm_homo_ratio}
\end{subfigure}
\caption{(a) Runtime of the modified Dijkstra compared to the heuristics for peak-demand minimization. (b) Approximation ratio of the heuristics for peak-demand minimization. (c) Runtime of Dijkstra compared to the heuristics for matching demand to supply. (d) Approximation ratio of the heuristics for matching demand to supply. In all experiments, $K=24$.}
\end{flushleft}
\end{figure*}

\subsection{Runtime - Peak-Demand Minimization}

In this section, we examine the runtime of the modified Dijkstra's algorithm compared to the heuristics proposed earlier, for solving the problem of peak-demand minimization. In addition to the runtime of the heuristics, we keep track of the heuristics approximation ratio, to show the heuristics gaining of the speed on the expense of losing optimality. We conducted an experiment, where we recorded the average runtime of the algorithms and the average approximation ratio of the heuristics for various values of $N$ in the range $N=[3,7]$. Throughout the experiment, we fixed the value of $K$ to 24 time periods. In addition, we used the Sliding-Window heuristic with a window size $W=3$.

The experimental results are illustrated in Fig. \ref{pdp_homo_time} and \ref{pdp_homo_ratio}. Fig. \ref{pdp_homo_time} shows the fast growth in the runtime of the modified Dijkstra's algorithm, reaching an average of 7.9 seconds when $N=7$.  This is caused by the exponential term $N$ in the number of operations performed by the algorithm.  The Greedy and Sliding-Window heuristics had significantly lower runtimes than the modified Dijkstra's algorithm. The Sliding-Window heuristic had a higher overall average runtime than the Greedy heuristic, which was 109 milliseconds. At the same time, the Sliding-Window heuristic had a lower overall average approximation ratio than the Greedy heuristic, which was 1.2. The Greedy heuristic had an overall average runtime of 0.5 milliseconds, and an overall average approximation ratio of 1.28. These results are consistent with our early analysis that the heuristics sacrifice optimality for the speed. Additionally, based on the above results, the modified Dijkstra's algorithm is only suitable, when $N$ is relatively small.

\subsection{runtime - Matching Demand to Supply}
In this section, we examine the runtime of the  Dijkstra's SP algorithm compared to the heuristics proposed earlier for matching demand to supply.  As in the previous section, we measure the heuristics' approximation ratios, and use the same values of $K$, $N$, and $W$.

The experimental results are illustrated in Fig. \ref{dsm_homo_time} and \ref{dsm_homo_ratio}. Fig. \ref{dsm_homo_time} illustrates the fast growth in the runtime of the Dijkstra's SP algorithm, reaching an average of 6.8 seconds when $N=7$. As in the previous section, this result can be justified by the exponential term $N$ in the number of operations performed by the algorithm. On the other hand, the Greedy and Sliding-Window heuristics had a significantly lower runtime than the Dijkstra's SP algorithm. The Sliding-Window heuristic had an overall average runtime of 117 milliseconds, while the Greedy heuristic had an overall average runtime of 2 milliseconds. At the same time, the Sliding-Window heuristic had an overall average approximation ratio of 1.18, while the Greedy heuristic had an overall average approximation ratio of 1.75. These results show that the heuristics obviously sacrifice optimality for the speed, and the Dijkstra's SP algorithm is only suitable when $N$ is relatively small.

\section{Conclusion}
\label{conc}
In this paper, we studied the problems of peak-demand minimization, and matching demand to supply in the smart grid, using dynamic pricing. First, we proposed generic heuristics to minimize the peak load, and match demand to supply. In addition, we provided theoretical analysis for the uniform pricing approach in the context of peak-demand minimization. Our theoretical analysis provide a guarantee that in the typical configuration of the system, the approximation ratio of the uniform pricing approach on the average case is constant. Furthermore, we proposed an optimal algorithm for each of the dynamic pricing problems, which can be used when the maximum deadline period of the power jobs is relatively small. Our experimental results showed that the heuristics and the uniform pricing approach perform generally well, with an approximation ratio below 2 for all the experiments. Also, the experiments demonstrated an obvious trade-off between optimality and speed. Consequently, we conclude that the optimal algorithms are only suitable when the maximum deadline period of the power jobs is relatively small.


\appendix
\section{NP-Completeness}
\label{np}

\begin{description}
	
	\item[\textsc{MSE Minimization }]
	
	\item[Given: ] The number of periods $m\in\mathbb{N}^+$, $n$ consumers with demand $d^i(k)$ for $i\in[n],k\in[m[$; threshold policies $\tau_1, \ldots,\tau_n$; supplies $S(k)\in \mathbb{N}^+, k\in[m]$; and a number $\alpha>0$.
	
	\item[Question: ] Is there a price vector $(\lambda_1,\lambda_2,\ldots,\lambda_K)$ such that $\Omega=\frac{1}{K}\sum_{k\in[m]}(u(k,\lambda_k)-S_k)^2$ greater than or equal to $t$?  
\end{description}

\begin{theorem}
	The \textsc{MSE minimization} problem is NP-complete.
\end{theorem}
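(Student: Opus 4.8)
The plan is to establish the two halves of NP-completeness separately; throughout I treat the decision version as asking whether some price vector makes $\Omega$ \emph{at most} a given bound $t$ (the minimization direction natural for an MSE). For membership in NP, the certificate is a price vector. Since the consumption at a period depends on the posted price only through which thresholds it falls at or below, one may assume every $\lambda_k$ equals one of the (at most $n$) distinct threshold values or a value above all of them, so the certificate has polynomial size; given it, one simulates the system period by period --- at each step the consumers that consume are exactly the still-pending ones whose current threshold is $\ge \lambda_k$ --- accumulates demands to obtain each $u(k,\lambda_k)$, and then checks the inequality on $\Omega$ after clearing the $1/K$ factor to stay in integer arithmetic. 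This is polynomial, so the problem lies in NP.

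For hardness I would reduce from \textsc{Partition}: given positive integers $a_1,\dots,a_n$ with $\sum_i a_i = 2B$, decide whether some subset sums to $B$ (multiplying all $a_i$ by $2$ first, so that each $a_i/2$ is an integer). The instance has one consumer $c_i$ per number, with demand $a_i$, arriving at period $i$ and with deadline $D_i$ where $D_n=n{+}1$ and $D_i=D_{i+1}+1$, so the horizon is $m=D_1=2n$; periods $1,\dots,n$ are ``decision'' periods and $p^\star=n{+}1$ is a ``collection'' period. The crucial design point is that, because $D_1>\dots>D_n$, at decision period $i$ the just-arrived $c_i$ has strictly smaller time-to-go --- hence, by the assumed monotonicity of the threshold policy, strictly larger threshold --- than every older consumer still pending at period $i$. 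Hence a price posted at period $i$ that exceeds all those older thresholds but is at most $c_i$'s threshold makes $c_i$ consume at period $i$ and nobody else (call this the isolating price); a price above all thresholds makes nobody consume; and a price below all thresholds at $p^\star$ makes every pending consumer consume there. Set the supplies to $S(i)=a_i/2$ for $i\le n$, $S(p^\star)=B$, and $S(k)=0$ for $k>p^\star$, and set the target to $t=\tfrac1m\sum_i(a_i/2)^2$.

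It then remains to prove the equivalence. The easy direction: from a subset $T$ summing to $B$, post a high price at each $i\in T$, the isolating price at each $i\notin T$, a very low price at $p^\star$, and anything afterwards; then $u(i)\in\{0,a_i\}$, $u(p^\star)=\sum_{i\in T}a_i=B$, and $u(k)=0$ for $k>p^\star$, giving $\Omega=\tfrac1m\sum_i(a_i/2)^2=t$. The converse is where I expect the real work, namely ruling out unintended price vectors. The key lemma is the monotonicity observation stated contrapositively: no older pending consumer can consume at period $i$ unless $c_i$ does too, so $u(i)\in\{0\}\cup\{a_i+(\text{nonnegative extra})\}$, whence $(u(i)-a_i/2)^2\ge(a_i/2)^2$ for every $i\le n$ and every price vector. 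Combined with $\Omega\le t$ this forces equality in each of these terms (so $u(i)\in\{0,a_i\}$ for all $i$, i.e.\ each $c_i$ consumes ``cleanly''), forces $u(k)=0$ for all $k>p^\star$ (nothing leaks past collection), and forces $u(p^\star)=B$; a final bookkeeping step, again using the deadlines and the monotone thresholds, shows that the consumers landing at $p^\star$ are exactly $\{c_i:u(i)=0\}$, so that set certifies \textsc{Partition}. Making this last chain of forced-behavior deductions airtight --- in particular, arguing that for \emph{every} price vector each pending consumer is driven either to its decision period or onward to $p^\star$ (or beyond, which only increases $\Omega$) --- is the main obstacle.
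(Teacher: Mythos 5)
Your proposal is correct and follows essentially the same reduction as the paper: per-period supplies $a_i/2$ forcing each decision period to contribute at least $a_i^2/4$ to the squared error, a final collection period with supply $B$, and a threshold ordering under which the newest pending consumer is always the most willing to consume, so that $u(i)\in\{0\}\cup[a_i,\infty)$. The only differences are cosmetic --- you reduce from \textsc{Partition} rather than \textsc{Subset-Sum}, you realize the threshold ordering via staggered deadlines and the assumed time-to-go monotonicity instead of directly assigning increasing per-consumer thresholds (which also lets you drop the paper's sorting of the $a_i$), and the ``forced behavior'' chain you flag as the main obstacle is exactly the paper's own closing argument and goes through as you outline it.
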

\begin{proof}
	The problem  \textsc{MSE minimization} is in NP since given a price vector  $\vec\lambda$, one can check in polynomial time if $\Omega\ge \alpha$. To prove the hardness part, we use a reduction from \textsc{Subset-Sum} problem: given a set of positive integer numbers $\mathcal{S}=(a_1,a_2,\ldots,a_K)$, and a positive integer $B$, the question is of whether or not there is a subset $V$ of $\{1,\ldots,K\}$ such that $\sum_{i\in V}a_i=B$?
	
	Let $(\mathcal{S},B)$ be an instance of \textsc{Subset-Sum}. We assume without loss of generality that $a_1\ge a_2\ge \cdots\ge a_K$. We construct a corresponding instance $\mathcal{I}$ of the \textsc{MSE Minimization} problem with $K$ consumers $\{c_1,c_2,\ldots,c_K\}$ and $K+1$ periods. Consumer $c_i$ arrives at period $i$ and departures at period $K+1$,  for all $i\in[K]$. The demand of the consumer $c_i$ at the period $k$ is $d^i(k)=a_i$ iff $k=i$, and $0$  if $j\not=i$, for all $i\in[K]$ and $k\in[K+1]$. Since the consumers' time-to-go are different from each other, their threshold policies are different as well. Define the threshold policy of the consumer $c_i$ as $\tau_i=i$ for all $i\in[K]$\footnote{Note that here we can define the threshold policies $\tau_1,\ldots,\tau_K$ in an arbitrarily way as long as $\tau_1<\ldots<\tau_K$.}. Note that $\tau_1<\tau_2<\cdots<\tau_K$. The targeted power supply of the period $k$, denoted by $S(k)$, is $a_k/2$ for $k\in[K]$, and the supply of the last period is $S(K+1)=B$. Let $\vec\lambda=(\lambda_1,\ldots,\lambda_{K+1})$ be the optimal price vector that minimizes $\Omega=\frac{1}{K+1}\sum_{k\in[K+1]}(u(k,\lambda_k)-S(k))^2$, where $u(k,\lambda_k)$ denotes the total power consumption  at the period $k$, given the price $\lambda_k$. Let $\alpha=\frac{1}{K+1}\sum_{k=1}^Ka^2_k/4$. We call $\mathcal{I}$ a yes-instance of \textsc{Consumption Minimization} if and only if there is a price vector $\vec \lambda$ such that the value of $\Omega$ is at most $ \alpha$.
	
	It is not hard to see that it does not matter how the price at the period $k, k\in[K]$, is placed, the 
	squared difference between the power consumed $u(k,\lambda_k)$ and the supply $S(k)$ at this period is at least  $a_k^2/4$. Indeed, let's consider an arbitrary period $k\in[K]$. The claim is obviously clear  if $c_k$ does consume her power at this period.  Now assume that  $c_k$  shifts the demand $a_k$ to the next periods. The first case when no one consumes power at this period, the different between the supply and demand is still $a_k/2$. In the second case,  there is some consumer, say $c_{k'}$, $k'<k$, moves her demand $a_{k'}$  to the period $k$, it follows that $u(k,\lambda_k)-S(k)=a_{k'}-a_k/2\ge a_k-a_k/2=a_k/2$ as $a_{k'}\ge a_{k}$ for any $k>k'$. 
	
	
	Now, suppose  that $(\mathcal{S},B)$ be a yes-instance of \textsc{Subset-Sum}. We can place a price vector  $\vec\lambda$, that matches the demand and supply at the last period, i.e., $u(K+1,\lambda_{K+1})=B$, while ensures that all the consumers will either consume their power right at the first period they arrive, or shift their demand to the last period. In fact, the square error $\Omega$ will be shown to be equals to $\frac{1}{K+1}\sum_{k=1}^Ka^2_k/4$. Indeed, let $V$ be  a subset of $\{1,\ldots,K\}$ such that $\sum_{i\in V}a_i=B$. We place a price for each of periods (from the first to the last one) using the following rule: for $k\in[K]$, we set $\lambda_k>\tau_k$ if $k\in V$, and set $\tau_{k-1}<\lambda_k<\tau_k$ (here we define $\tau_0=0$), otherwise; and for the last period $K+1$, we set $\lambda_{K+1}<\min\{\tau_{k}|\,k\in V\}$. This price setting guarantees that every consumer $c_i$ will consume the power $a_i$ right at the period $i$ if $i\in V$, and shift the demand to the last period, otherwise, for all $i\in [K]$. Furthermore, those consumers, who did not consume the power at their first period, will do it only at the last period, where the price provided is lower than their threshold policies. It follows that $u(K+1,\lambda_{K+1})=B$. Hence, we have $\Omega=\frac{1}{K+1}\sum_{k=1}^Ka^2_k/4=\alpha$. 
	
	Conversely, if $(\mathcal{S},B)$ be a no-instance of \textsc{Subset-Sum}, there is no subset of $\mathcal{S}$ for which the sum of its elements is exactly $B$. Hence, for any price vector $\vec\lambda$, any shifting of the consumers' demands from the first $K$ periods to the last period $K+1$ will not get rid of the different between the total power consumption and the targeted power supply at the last period. Moreover, as we argued earlier, $(u(k,\lambda_k)-S(k))^2$ is at least $a^2_k/4$, for every period $k\in [K]$. Therefore, it must hold that $\Omega>\frac{1}{K+1}\sum_{k=1}^Ka^2_k/4>\alpha$.
	
	In conclusion, $(\mathcal{S},B)$ is a yes-instance of \textsc{Subset-Sum} if and only if the corresponding instance $\mathcal{I}$ is a yes-instance of \textsc{MSE Minimization}. This completes the proof.\end{proof}

\end{document}